\theoremstyle{plain}
\newtheorem{theorem}{Theorem}
\newtheorem{lem}[theorem]{Lemma}
\theoremstyle{definition}
\def\R{{\mathbb R}}
\def\N{{\mathbb N}}
\def\Z{{\mathbb Z}}
\def\T{{\mathbb T}}
\def\E{{\mathcal E}}
\def\T{{\mathcal T}}
\def\G{{\mathcal G}}
\def\K{{\mathcal K}}
\def\a{{\alpha}}
\def\b{{\beta}}
\newcommand{\la}{\overleftarrow}
\newcommand{\ra}{\overrightarrow}
\newcommand{\tl}{\triangleleft}
\newcommand{\h}{\hspace{-.1cm}}
\def\bar{\overline}
\def\cdot{\raisebox{2pt}{$\centerdot$}}
\def\lpinf{\hspace{.2cm}+^{\hspace{-.5cm}\infty}\hspace{.2cm}}
\def\rpinf{+^{\hspace{-.1cm}\infty}}
\begin{document}

\title{Symbolic dynamics for Lozi maps}
\author{M.\ Misiurewicz,
  S.\ \v{S}timac\thanks{Supported in part by the NEWFELPRO Grant
    No.~24 HeLoMa.}} \date{}

\maketitle

\begin{abstract}
In this paper we study the family of the Lozi maps $L_{a,b} : \R^2 \to
\R^2$, $L_{a,b} = (1 + y - a|x|, bx)$, and their strange attractors
$\Lambda_{a,b}$. We introduce the set of kneading sequences for the
Lozi map and prove that it determines the symbolic dynamics for that
map. We also introduce two other equivalent approaches.
\end{abstract}

{\it 2010 Mathematics Subject Classification:} 37B10, 37D45, 37E30,
54H20

{\it Key words and phrases:} Lozi map, Lozi attractor, symbolic
dynamics, kneading theory

\baselineskip=18pt

\section{Introduction}\label{sec:intro}
Symbolic dynamics and the Milnor -- Thurston kneading theory are very
powerful tools in studying one-dimensional dynamics of unimodal maps,
such as the tent maps, or the quadratic maps. One of the most
important ingredients in the kneading theory for unimodal maps is the
kneading sequence, which is defined as the itinerary of the critical
value. This symbol sequence is a complete invariant of the topological
conjugacy classes of unimodal maps with negative Schwarzian derivative
(when there is no periodic attractor). A key step in proving this fact
is that the set of all possible itineraries of such a map is
completely characterized by its kneading sequence. For a unimodal map
$f$ with the turning point $c$, restricted to an invariant interval $I
= [f^2(c),f(c)] \subseteq [0,1]$, called the core of $f$, this
characterization is as follows. A sequence $\ra x = (x_i)_{i \in
  \N_0}$ is an itinerary of some point $x \in I$ if and only if
$\sigma^n(\ra x) \preceq \ra k(f)$ for every $n \in \N_0$, where $\ra
k(f)$ is the kneading sequence of $f$, $\sigma$ is the shift map,
$\preceq$ is the parity-lexicographical ordering, and $\N_0$ is the
set of all non-negative integers ($\N$ will be the set of positive
integers). Besides this, the kneading theory plays a fundamental role
in various applications, for example in proving monotonicity of the
topological entropy for the family of skew tent maps, see~\cite{MV}.

On the other hand, no theory of comparable rigor for such `good
symbolic invariants' is currently available for general once-folding
mappings of the plane such as the H\'enon and the Lozi mappings. The
obstruction to constructing a similar theory for such mappings are
that they lack critical points in the usual sense (their Jacobians
never vanish) and that their dynamical space lack a natural order
which one-dimensional dynamics \emph{a priori} have.

In this paper we study the family of the Lozi maps $L_{a,b} : \R^2 \to
\R^2$, $$L_{a,b} = (1 + y - a|x|, bx)$$ and their strange attractors
$\Lambda_{a,b}$. We introduce the set of kneading sequences for the
Lozi map and prove that the set of all itineraries of points in
$\Lambda_{a,b}$ is completely characterized by the set of kneading
sequences of $L_{a,b}$. This characterization for the Lozi maps has
the same flavor as the Milnor -- Thurston characterization for the
unimodal maps mentioned above. The difference is that the Lozi map has
countably many kneading sequences and one needs criteria when to use
which sequence. We also introduce a \emph{folding pattern} and a
\emph{folding tree}, which can replace the set of kneading
sequences. They carry the same information as the set of kneading
sequences, coded in a different way.

The paper is organized as follows. In Section~\ref{sec:pre} we
summarize basic information about Lozi maps. In Sections~\ref{sec:def}
and~\ref{sec:ba} we define various notions used later in the paper. In
Section~\ref{sec:orders} we introduce orders which can partially
replace the natural orders on an interval and in the set of
itineraries, that work so well for unimodal interval maps. In
Section~\ref{sec:three} we present the three equivalent approaches to
coding the basic information about a Lozi map: the set of kneading
sequences, the folding pattern, and the folding tree. In
Section~\ref{sec:symbolic} we show how the set of kneading sequences
(or the folding pattern, or the folding tree) determine the symbolic
dynamics for a Lozi map.

\section{Preliminaries}\label{sec:pre}

The family of piecewise affine mappings $L_{a,b} = (1 + y - a|x|, bx)$
of the plane into itself was given by Lozi in 1978 \cite{L}. The
results of his numerical investigations for the values of parameters
$a = 1.7$ and $b = 0.5$ suggested the existence of a strange
attractor. Figure \ref{fig.LA} shows the strange attractor for Lozi's
original choice of parameters.

\begin{figure}[ht]
\centering
\includegraphics[width=7.0cm,height=6.0cm]{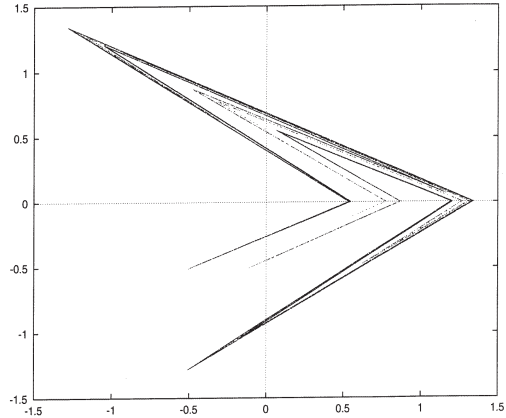}
\caption{The Lozi attractor for $a = 1.7$ and $b = 0.5$.}
\label{fig.LA}
\end{figure}

A mathematical justification for the existence of strange attractors
of the Lozi maps was given by the first author in 1980 \cite{M}. It
was proved there that the Lozi mappings have strange attractors for
$(a,b) \in S$, where the set $S$ is shown in Figure \ref{fig.S} (the
figure is a copy of a figure in \cite{M}) and is given by the
following inequalities: $b > 0$, $a\sqrt{2} > b+2$, $b <
\frac{a^2-1}{2a+1}$, $2a + b < 4$.\footnote{In several figures we use
  values of $a$ and $b$ that are not in this set, in order to get a
  better picture.}

\begin{figure}
	\centering
	\includegraphics[width=8.0cm,height=6.5cm]{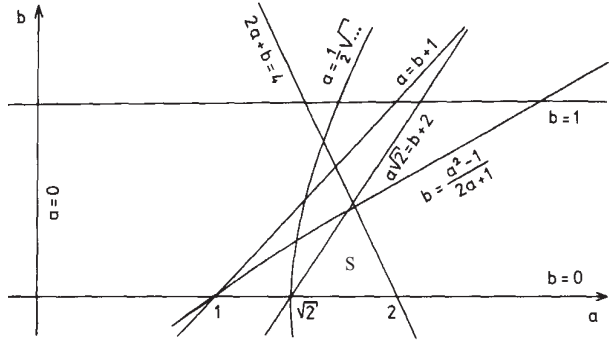}
	\caption{The set S.}
	\label{fig.S}
\end{figure}

Let $(a, b) \in S$ and, for simplicity, denote $L := L_{a,b}$. The map
$L$ is a homeomorphism which linearly maps the left half-plane onto
the lower one and the right one onto the upper one. There are two
fixed points: $X = (\frac{1}{1+a-b}, \frac{b}{1+a-b})$ in the first
quadrant and $Y = (-\frac{1}{a+b-1}, -\frac{b}{a+b-1})$ in the third
quadrant. They are hyperbolic. Note that the Lozi map $L$ is not
everywhere differentiable, and therefore its hyperbolic structure can
be understood only as the existence of a hyperbolic splitting at those
points at which it may exist (for which the derivative exists at the
whole trajectory). Recall, the \emph{stable}, respectively
\emph{unstable, manifold} of a fixed point $P$ (or a periodic point in
general), $W^s_P$, respectively $W^u_P$, is an invariant curve which
emanates from $P$,
$$W^s_P = \{ T : L^n(T) \to P \textrm{ as } n \to \infty \}, W^u_P =
\{ T : L^{-n}(T) \to P \textrm{ as } n \to \infty \}.$$ For the Lozi
map $L$ the stable and unstable manifolds are broken lines, and
therefore not the differentiable manifolds. The half of the unstable
manifold $W^u_X$ of the fixed point $X$ that starts to the right
intersects the horizontal axis for the first time at the point $Z =
(\frac{2+a+\sqrt{a^2+4b}}{2(1+a-b)}, 0)$. Let us consider the triangle
$\Delta$ with vertices $Z$, $L(Z)$ and $L^2(Z)$, see
Figure~\ref{fig.D}.
\begin{figure}[h]
	\begin{center}
	{
	\centering
	\begin{tikzpicture}
	\tikzstyle{every node}=[draw, circle, fill=white, minimum size=1.5pt, inner sep=0pt]
	\tikzstyle{dot}=[circle, fill=white, minimum size=0pt, inner sep=0pt, outer sep=-1pt]
	\node (n1) at (2*4/3,0) {};
	\node (n2) at (-2*4/3,2*2/3)  {};
	\node (n3) at (-2*2/3,-2*2/3)  {};
	\node (n4) at (2*4/9,2*2/9)  {};
	\node (n5) at (0,2*1/3)  {};

	\tikzstyle{every node}=[draw, circle, fill=white, minimum size=0.1pt, inner sep=0pt]
	\node (n6) at (-3,0) {};
	\node (n7) at (3,0)  {};
	\node (n8) at (0,2)  {};
	\node (n9) at (0,-2)  {};

	\foreach \from/\to in {n1/n2,n1/n3, n2/n3, n6/n7,n8/n9}
	\draw (\from) -- (\to);

	\node[dot, draw=none, label=above: $Z$] at (2*4/3,0) {};
	\node[dot, draw=none, label=above: $L^{-1}(Z)$] at (0.8,1.6) {};
	\node[dot, draw=none, label=above: $L(Z)$] at (-2.3,2.1) {};
	\node[dot, draw=none, label=above: $L^2(Z)$] at (-1,-1) {};
	\node[dot, draw=none, label=above: $X$] at (2*4/9,2*2/9) {};

	\end{tikzpicture}}
	\end{center}
	\caption{The triangle $\Delta$.}
	\label{fig.D}
\end{figure}
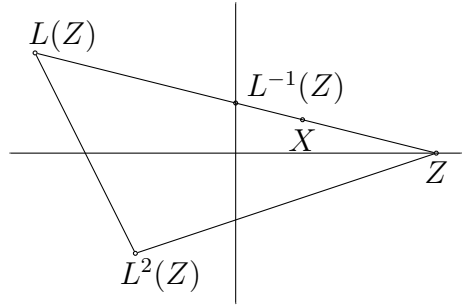
In the mentioned paper \cite{M} it was proved that $L(\Delta) \subset
\Delta$ and moreover, that
$$\Lambda = \bigcap_{n=0}^{\infty} L^n(\Delta)$$ is the strange
attractor. Moreover, $L|_{\Lambda}$ is topologically mixing, and
$\Lambda$ is the closure of $W^u_X$. Recall that an \emph{attractor}
is the set that is equal to the intersection of images of some its
neighborhood, and such that the mapping restricted to this set is
topologically transitive. An attractor is called \emph{strange} if it
has a fractal structure.

We code the points of $\Lambda$ in the following standard way. To a
point $P = (P_x, P_y)\in \Lambda$ we assign a bi-infinite sequence
$\bar p = \dots p_{-2} \, p_{-1} \cdot \, p_0 \, p_1 \, p_2 \dots$
such that

$p_n = -1$ if $P^n_x \le 0$ and

$p_n = +1$ if $P^n_x \ge 0$, where $L^n(P) = (P^n_x, P^n_y)$.\\
The dot shows where the 0th coordinate is. Moreover, to simplify
notation, we use just symbols $+$ and $-$ instead of $+1$ and $-1$.

A bi-infinite symbol sequence $\bar q = \dots q_{-2} \, q_{-1} \cdot
\, q_0 \, q_1 \, q_2 \dots$ is called \emph{admissible} if there is a
point $Q \in \Lambda$ such that $\bar q$ is assigned to $Q$. We will
call this sequence an \emph{itinerary} of $Q$. Obviously, some points
of $\Lambda$ have more than one itinerary. We denote the set of all
admissible sequences by $\Sigma_\Lambda$. It is a metrizable
topological space with the usual product topology. Since the
half-planes that we use for coding, intersected with $\Lambda$, are
compact, the space $\Sigma_\Lambda$ is compact. From the hyperbolicity
of $L$ it follows that for every admissible sequence $\bar q$ there
exists only one point $Q\in\Lambda$ with this itinerary. The detailed
proof can be extracted from the paper of Ishii~\cite{I}. Because of
this uniqueness, we have a map $\pi:\Sigma_\Lambda\to\Lambda$, such
that $\bar q$ is an itinerary of $\pi(\bar q)$. Clearly, $\pi\circ
L=\sigma\circ\pi$.

In fact, Ishii was identifying the itineraries of the same point and
he proved that the shift map in the quotient space is conjugate (via
the map induced by $\pi$) with $L$ on $\Lambda$. In our setup, this
just means that $\pi$ is continuous, and is a semiconjugacy between
$(\Sigma_\Lambda,\sigma)$ and $(\Lambda,L)$.

\section{Definitions}\label{sec:def}

Let us consider the unstable manifold of the fixed point $X$, $W^u_X$.
It is an image of the real line under a map which is continuous and
one-to-one. For simplicity, we denote it by $R := W^u_X$. We denote by
$R^+$ the half of $R$ that starts at the fixed point $X$ and goes to
the right and intersects the horizontal axis for the first time at the
point $Z$. By $R^-$ we denote the other half of $R$ that also starts
at the fixed point $X$ and goes to the left and intersects the
vertical axis for the first time at the point $L^{-1}(Z)$. Let $[A, B]
\subset R$ denote an arc of $R$ with boundary points $A$ and $B$, and
let $(A, B) = [A, B] \setminus \{ A, B \}$. For a point $P \in R$ and
$\epsilon > 0$ let
$$(P - \epsilon, P + \epsilon) = \{ Q \in R : d(P,Q) < \epsilon \}
\subset R,$$ where $d(P,Q)$ denotes length of the arc $[P, Q]$.

We introduce an ordering $\tl$ on $R$ in the following natural way:
For $P, P' \in R^+$ we say that $$P \tl P' \ \textrm{ if } \ [X, P]
\subset [X, P'].$$ For $P, P' \in R^-$ we say that $$P \tl P'
\ \textrm{ if } \ [X, P] \supset [X, P'].$$ Also, if $P \in R^-$ and
$P' \in R^+$, we say that $P \tl P'$. Note that $L(R^+) = R^-$ and
$L(R^-) = R^+$.

{\bf Gluing points.} We call a point $G = (G_x, G_y) \in R$ a
\emph{gluing point} if $G_x = 0$ and there is no $k \in \N$ such that
$G^{-k}_x = 0$. Let us denote the set of all gluing points by ${\G}$.

{\bf Turning points.} We call a point $T = (T_x, T_y) \in R$ a
\emph{turning point} if $T = L(G)$ for some $G \in \G$. In this case
$T_y = 0$. Let us denote the set of all turning points by ${\T}$. Let
us denote by $\widehat{\T} := \{ L^{j}(T) : T \in {\T}, j \in \N
\}$ the set of \emph{postturning points}.

{\bf Basic points.} Let
$$\E = {\G} \cup {\T} \cup \widehat{\T} = \{ L^{j}(G) : G \in
{\G}, j \in \N_0 \},$$ We call the points of $\E$ the \emph{basic
  points}.

We can think of $R$ in two ways. The first one is $R$ as a subset of
the plane. The second way is to ``straighten'' it and consider it the
real line. Our order on $R$ is the natural order when we use the
second way. Note that the topology in $R$ is different in both cases.

\begin{lem}\label{l:discrete}
The set $\E$ is discrete, and therefore countable.
\end{lem}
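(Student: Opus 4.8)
The plan is to argue that $\E$ accumulates nowhere in $R$, when $R$ is given the ``straightened'' topology (i.e. $R$ viewed as the real line, so that arcs of finite length are compact). Since $\E = \{L^j(G) : G \in \G,\ j \in \N_0\}$, it suffices to show that each compact arc $[A,B] \subset R$ contains only finitely many gluing points, and only finitely many forward images of gluing points; discreteness of $\E$ then follows, and a discrete subset of the separable space $R$ is countable. First I would reduce to a single fundamental domain: because $X$ is a hyperbolic fixed point and $R = W^u_X$, the arc $R^+$ from $X$ to $Z$ (and similarly the arc of $R^-$ from $X$ to $L^{-1}(Z)$) is a fundamental domain for the action of $L$ on $R \setminus \{X\}$, in the sense that every point of $R$ other than $X$ has exactly one iterate in the half-open arc $[Z, L^2(Z))$ along $R$ (using $L(R^+)=R^-$, $L(R^-)=R^+$). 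Hence it is enough to prove that this fundamental arc meets $\E$ in a finite set, and more specifically that it contains only finitely many gluing points.

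The core estimate is that a gluing point is a transversal crossing of $R$ with the $y$-axis, and such crossings cannot accumulate on a compact piece of $R$. Here I would use the structure established in Section~\ref{sec:pre}: $R = W^u_X$ is a broken line, i.e. a locally finite union of straight segments, the breakpoints being exactly the images $L^n(L^{-1}(Z)) = L^{n-1}(Z)$ and $X$ (points where $W^u_X$ crosses the $y$-axis, where $L$ is non-smooth). On each such straight segment, $R$ is affine, so it crosses the vertical line $\{x = 0\}$ in at most one point (it cannot lie inside that line, since $\Lambda$ — hence $R$ — is two-dimensional, or more simply since the slope of each piece of $W^u_X$ is bounded away from vertical by the cone condition from \cite{M}). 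Therefore, on any arc of $R$ that is covered by finitely many affine pieces, there are only finitely many points with $x$-coordinate $0$, i.e. finitely many candidates for gluing points. So I must show the fundamental arc $[Z, L^2(Z)]$ — or, to be safe, $[L^{-2}(Z), L^2(Z)]$, an arc containing a full fundamental domain together with one crossing of the $y$-axis on each side — is covered by finitely many affine pieces of $R$. This is where I would invoke that $W^u_X$ is a broken line whose vertices form a sequence escaping to infinity in the straightened order (each application of $L$ pushes vertices outward, since $R$ is the unstable manifold), so only finitely many vertices lie in any bounded arc.

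The remaining point is that forward images of gluing points also stay discrete: if $G \in \G$ lies in some arc, then $L(G) = T \in \T$ has $T_y = 0$, and $L^j(G)$ for $j \geq 1$ lies on $R^+$ or $R^-$ alternately; but since each $G$ is a breakpoint-type crossing, its orbit $\{L^j(G)\}_{j\ge 1}$ consists (for $j$ large) of points pushed toward $Z$ and beyond in the expanding direction, so only finitely many land in a fixed bounded arc — one combines the "finitely many $G$ in the fundamental domain" bound with the observation that for fixed $j$ the map $L^j$ is injective and sends the finite set of gluing points in one fundamental domain into one arc, and over the finitely many $j$ for which $L^j(G)$ can reach the given compact arc this is a finite union. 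I expect the main obstacle to be making the last assertion precise: one must rule out that infinitely many distinct gluing points, each with a very small forward-orbit segment, conspire to accumulate inside a bounded arc. The clean way around this is to work in the straightened real-line picture and use that $L$ expands $R$ (it is the unstable manifold and $L$ is uniformly hyperbolic on $\Lambda$ by \cite{M}), so $L$ and each $L^j$ are ``expanding'' homeomorphisms of $R$: preimages of a bounded arc under $L^j$ are bounded arcs shrinking with $j$, hence meet the (already-established) discrete set of gluing points in finitely many points, and only finitely many $j$ contribute. Assembling these, $\E \cap [A,B]$ is finite for every compact arc $[A,B]$, so $\E$ is discrete, and therefore countable. \hfill$\qed$
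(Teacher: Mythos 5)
There is a genuine gap, and it sits exactly at the step you flagged as the crux. You assert that the vertices (breakpoints) of the broken line $R=W^u_X$ are ``exactly the images $L^{n-1}(Z)$ and $X$,'' i.e.\ the forward orbit of the single crossing $L^{-1}(Z)$, and that these escape to infinity so that any bounded arc meets only finitely many of them. This is false: $L$ is non-smooth along the entire $y$-axis, and $R$ crosses the $y$-axis at infinitely many points (all the gluing points $G_n$, $n\in\Z$, among others), each of whose forward images is a vertex of $R$. So the vertex set of $R$ contains all turning and postturning points --- that is, essentially the set $\E$ whose discreteness you are trying to prove. The claim ``only finitely many vertices lie in any bounded arc'' is therefore not an input to the lemma but a restatement of it, and the justification ``each application of $L$ pushes vertices outward'' overlooks that each application of $L$ also \emph{creates} new vertices, at the images of the $y$-axis crossings of the previous stage, and nothing you have said prevents those from landing inside a fixed bounded arc. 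Your closing paragraph does not repair this: it invokes the ``already-established'' discreteness of $\G$, which was established only via the flawed vertex count, so the circularity persists.

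The fix is an induction that you almost set up when counting affine pieces, and it is what the paper does. Start from $[L(Z),Z]$, which contains exactly three points of $\E$ (namely $L(T_0)\tl G_0\tl T_0$), and note $R=\bigcup_{i\ge 0}L^i([L(Z),Z])$ is an increasing union, with every bounded arc contained in some $L^i([L(Z),Z])$ by expansion. For the inductive step, observe that if $P$ and $Q$ are consecutive points of $\E$, then $[P,Q]$ is a straight segment lying in one half-plane, so $L([P,Q])$ is a straight segment crossing the $y$-axis at most once; since the only points of $\E$ strictly between $L(P)$ and $L(Q)$ must be gluing points (the others are images of points of $\E$ between $P$ and $Q$, of which there are none), at most one new point of $\E$ appears between $L(P)$ and $L(Q)$. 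Hence each $L^i([L(Z),Z])$ meets $\E$ in a finite set, and $\E$ is discrete. Equivalently, you could run your affine-piece count as an induction (the number of affine pieces at most doubles at each application of $L$, since each piece crosses the $y$-axis at most once); either way, the finiteness must be \emph{propagated} by induction, not extracted from a global description of the vertex set, because no such simple description exists.
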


\begin{proof}
Note that $L(Z) \tl L^{-1}(Z) \tl Z$ are three consecutive basic
points, where $L^{-1}(Z)$ is a gluing point and $Z$ is a turning
point. Note also that $L|_R$ is expanding and $R =
\bigcup_{i=0}^{\infty}L^i([L(Z), Z])$. If $P$ and $Q$ are two
consecutive points of $\E$, then between $L(P)$ and $L(Q)$ there is at
most one point of $\E$. Therefore by induction we see that in
$L^i([L(Z), Z])$ there are only finitely many points of $\E$. This
proves that $\E$ is discrete.
\end{proof}

The set of all gluing points is discrete. Let
$${\G} = \{ G_n, n \in \Z \},$$ where $G_0 = L^{-1}(Z)$ and $G_n \tl
G_{n+1}$ for every $n \in \Z$. The set of all turning points is also
discrete. Let
$${\T} = \{ T_n, n \in \Z \},$$ where $T_0 = Z$ and $T_n \tl T_{n+1}$
for every $n \in \Z$. We have $L(G_n) = T_{-n}$ for every $n \in \Z$.
Let
$$\E = \{ E_n, n \in \Z \},$$ where $E_0 = G_0$ and $E_i \tl E_{i+1}$
for every $i \in \Z$. Note that $E_1 = T_0$, $E_2 = G_1$, $E_{-1} =
L(T_0)$, and $E_{-2} = G_{-1}$.

We call the arcs between two consecutive basic points $[E_i,
  E_{i+1}]$, $i \in \Z$, the \emph{basic arcs}.

We denote the $j$th image of the $k$th turning point by $T^{j+1}_k :=
L^{j}(T_k)$, $j \in \N_0$ (note that $T^1_k = T_k$). Also, we denote
the $x$- and $y$-coordinates of the basic points as follows: $G_k =
(G_{k,x},G_{k,y})$, $T_k = (T_{k,x},T_{k,y})$, $T^j_k =
(T^j_{k,x},T^j_{k,y})$.

Let us now consider itineraries of points of $R$. Let $\Sigma_R$
denote the set of all itineraries of all points of $R$. For a sequence
$\bar p = (p_i)_{i \in \Z} \in \Sigma_R$ and $n \in \Z$, we will call
the left-infinite sequence $\la p_{\h n} = \dots p_{n-2} \, p_{n-1} \,
p_n$ a \emph{left tail} of $\bar p$ and the right-infinite sequence
$\ra p_{\h n} = p_n \, p_{n+1} \, p_{n+2} \dots$ a \emph{right tail}
of $\bar p$. We will call a finite sequence $W = w_1 \dots w_k$ a
\emph{word} and denote its length by $|W|$, $|W| = k$. We will denote
an infinite to the right (respectively, left) sequence of $+$s by
$\rpinf$ (respectively, $\lpinf$).

The itinerary of the fixed point $X$ (which is in the first quadrant)
is $\bar x = \lpinf \cdot \rpinf$. Since $R$ is the unstable
manifold of $X$, for every point $P \in R$ and its itinerary $\bar p$,
there is $n \in \Z$ such that $\la p_{\h n} = \lpinf$. Therefore, 
there exists the smallest integer $k > n$ such that $P_x^k = 0$ and 
$R$ crosses the $y$-axis at $L^k(P)$.
Also $P_y^{k+1} = 0$, $P_x^{k+1} > 0$ and there exists $\delta > 0$
such that for all points $Q \in (P - \delta , P + \delta)$ we have $0
< Q_x^{k+1} \le P_x^{k+1}$. In other words, $R$ makes a turn at the
point $L^{k+1}(P)$. Moreover, if there exists also $l > k$ such that
$P_x^l = 0$, then $R$ does not cross $y$-axis at $L^{l}(P)$, but also
makes a turn at $L^{l}(P)$. In other words, there exists $\epsilon >
0$ such that for every point $Q \in (P - \epsilon , P + \epsilon)$, $Q
\ne P$, and its itinerary $\bar q$, we have either $Q_x^l < P_x^l$ and
hence $q_l = -$, or $Q_x^l > P_x^l$ and hence $q_l = +$. Therefore,
instead of considering two itineraries of $P$, we consider only one,
with $p_l = -$ in the first case and $p_l = +$ in the second case.
This amounts to removing from $\Sigma_R$ isolated points. The
remaining part of $\Sigma_R$ will be denoted $\Sigma_R^e$. In such a
way every point $P \in R$ has at most two itineraries, and if there
are two of them, then they differ at one coordinate $k$, and then
$L^k(P)$ is a gluing point. In such a case we will sometimes write
$p_k=\pm$.

The set $\Sigma_R^e$ has a very useful property.

\begin{lem}\label{approx}
Assume that $\bar p\in\Sigma_R^e$ and $n\in\N$. Then there is $\bar
q\in\Sigma_R^e$ such that $p_{-n},\dots p_n=q_{-n}\dots q_n$ and $\bar
q$ is the only itinerary of $\pi(\bar q)$.
\end{lem}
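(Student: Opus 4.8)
The plan is to obtain $\bar q$ as the itinerary of a point $Q\in R$ lying on a short subarc around a point with itinerary $\bar p$ and chosen so that no iterate of $Q$ meets the $y$-axis. Such a $Q$ has exactly one itinerary: by the description of itineraries given just before the lemma, a point of $R$ has two itineraries only when some iterate of it is a gluing point, and gluing points lie on the $y$-axis. Hence $\bar q$ will be the only itinerary of $\pi(\bar q)=Q$, and if $Q$ is taken close enough to the chosen point, the symbols of its itinerary in positions $-n,\dots,n$ will coincide with those of $\bar p$.

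Fix $P\in R$ with itinerary $\bar p$; it exists since $\Sigma_R^e\subseteq\Sigma_R$. The first thing to verify is that the central cylinder about $P$ still contains a non-degenerate subarc of $R$. For each $j$ with $|j|\le n$ the set $\{Q\in R:Q^j_x=0\}$ is the preimage of $R\cap\{x=0\}$ under the homeomorphism $L^j|_R$ (in the line topology on $R$); since $R\cap\{x=0\}\subseteq\E$ is discrete by Lemma~\ref{l:discrete}, so is $\{Q\in R:Q^j_x=0\}$. Hence on a small arc about $P$ the number $Q^j_x$ vanishes at most at $P$, and its sign is constant on each side of $P$. If $P^j_x\ne 0$, or if $P^j_x=0$ but $R$ makes a turn at $L^j(P)$, then this sign equals $p_j$ on a full two-sided neighbourhood of $P$ (in the turn case because the reduction defining $\Sigma_R^e$ sets $p_j$ equal to that common sign); if $L^j(P)$ is a gluing point, it equals $p_j$ on exactly one side. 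By the structural description recalled above, a point of $R$ has at most one coordinate whose image is a gluing point, so at most one such $j$ occurs among $-n,\dots,n$; intersecting over $|j|\le n$ we get a non-degenerate subarc $I\subset R$ with $P\notin I$ on which, since each $Q^j_x$ is nonzero there, every itinerary has symbols $p_{-n},\dots,p_n$ in positions $-n,\dots,n$.

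The second ingredient is that only countably many points of $R$ have an orbit meeting the $y$-axis. Indeed $R\cap\{x=0\}\subseteq\E$: a point $Q\in R$ with $Q_x=0$ is either a gluing point, or its backward orbit visits the $y$-axis, and since that orbit converges to $X$ (whose first coordinate is positive) it does so only finitely often, so the visit furthest back in time occurs at a gluing point $G$ and then $Q=L^m(G)\in\widehat{\T}\cup\T$ for some $m\ge1$. Thus $R\cap\{x=0\}$ is countable, and so is the set $\bigcup_{k\in\Z}(L^k|_R)^{-1}(R\cap\{x=0\})$ of points of $R$ whose orbit meets the $y$-axis. Since $I$ is uncountable, choose $Q\in I$ outside this set. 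Then no iterate of $Q$ lies on the $y$-axis; in particular none is a gluing point, so $Q$ has a single itinerary $\bar q$, and $\bar q$ is the only itinerary of $\pi(\bar q)=Q$. Also $\bar q\in\Sigma_R^e$: each $Q^j_x$ is nonzero, hence of locally constant sign, so for every $m$ all points of $R$ close enough to $Q$ have an itinerary agreeing with $\bar q$ on positions $-m,\dots,m$, and, $R$ having no isolated points, $\bar q$ is not isolated in $\Sigma_R$. Finally $Q\in I$ gives $q_{-n}\dots q_n=p_{-n}\dots p_n$, which completes the proof.

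The step requiring the most care is the construction of the arc $I$, and inside it the reduction to at most one ``gluing'' coordinate among $-n,\dots,n$: without it, two crossing constraints on opposite sides of $P$ could collapse the central cylinder to the single point $P$, leaving no room to move $Q$ off the countable exceptional set.
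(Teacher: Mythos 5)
Your proof is correct, and it reaches the same two milestones as the paper's argument --- first a nondegenerate subarc of $R$ inside the central cylinder, then a point of that subarc avoiding the countable set of points whose orbit meets the $y$-axis --- but it obtains the first milestone by a genuinely different route. The paper exploits piecewise linearity: the set of points of $\Delta$ admitting an itinerary agreeing with $\bar p$ on positions $-n,\dots,n$ is a closed convex polygon, so its trace on $R$ near $P$ is a closed interval $J$, and $J$ cannot degenerate to a point because then $\bar p$ would be isolated, contradicting $\bar p\in\Sigma_R^e$. You instead analyze the arc directly: each set $\{Q\in R:Q^j_x=0\}$ is discrete in the line topology, at most one coordinate $j\in\{-n,\dots,n\}$ sends $P$ to a gluing point (where the sign constraint is one-sided), and at turn coordinates the constraint is two-sided precisely because $\bar p$ survives the reduction defining $\Sigma_R^e$ --- which is where your argument, too, uses the hypothesis $\bar p\in\Sigma_R^e$, just less directly. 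Your version is more elementary (no convexity) but leans harder on the structural description of crossings versus turns given before the lemma; the paper's is shorter. The second halves coincide, except that you additionally justify $R\cap\{x=0\}\subseteq\E$ (hence countability of the exceptional set) and verify explicitly that $\bar q\in\Sigma_R^e$, points the paper leaves implicit. All the steps you supply check out, including the observation that a point of $R$ can have at most one iterate that is a gluing point.
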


\begin{proof}
Since the map $L$ is linear on the left and right half-planes, the set
of points of $\Delta$ that have an itinerary $\bar q$ such that
$p_{-n},\dots p_n=q_{-n}\dots q_n$ is a closed convex polygon, perhaps
degenerate. Therefore, in a neighborhood of $P=\pi(\bar p)$ in $R$ (in
the topology of the real line), its intersection with $R$ is a closed
interval $J$, perhaps degenerate to a point. However, if $J$ is only one
point, then the itinerary $\bar p$ is isolated and hence one of the removed ones, that
is, it does not belong to $\Sigma_R^e$. Therefore, $J$ is a
nondegenerate closed interval.

The set $R$ intersects the $y$-axis only at countable number of
points. Therefore the set of points $Q\in R$ such that $L^k(Q)$
belongs to the $y$-axis for some $k\in\Z$, is countable. Thus, there
are points $Q\in J$ such that for every $k\in\Z$ the point $L^k(Q)$
does not belong to the $y$-axis. Such $Q$ has only one itinerary. This
completes the proof.
\end{proof}

\section{Basic arcs and coding}\label{sec:ba}

Recall that we call the arcs between two consecutive basic points $[E_i,
  E_{i+1}]$, $i \in \Z$, the \emph{basic arcs}.

Let $P \in R$ be a point and let $\bar p$ be its itinerary. Note that
$$P \in (G_0, T_0) = (E_0, E_1) \ \Rightarrow \ \la p_{\h 0} =
\lpinf.$$ Since always $T_{0,x} > 0$ and $T^2_{0,x} < 0$ (that
follows easily from the assumptions on $a$ and $b$), we have
$L([E_0, E_1]) \ni G_0$. This implies that
$$L([E_0, E_1]) = [T^2_0, G_0] \cup [G_0, T_0] = [E_{-1},
  E_0] \cup [E_0, E_1]$$ and $$P \in (E_{-1}, E_0)
\ \Rightarrow \ \la p_{\h 0} = \lpinf -.$$

Consider now $L([T^2_0, G_0]) = [T_0, T^3_0]$. If $T^3_{0,x} \ge 0$
then $[T_0, T^3_0]$ does not contain any gluing point (both boundary
points are in the right half-plane) and hence $[T_0, T^3_0]$ is a
basic arc
$$L([E_{-1}, E_0]) = [E_{1}, E_{2}] \subset R^+$$ and $$P \in (E_1,
E_2) \ \Rightarrow \ \la p_{\h 0} = \lpinf -+.$$

If  $T^3_{0,x} < 0$ then  $G_1 \in [T_0, T^3_0]$ and
$$L([E_{-1}, E_0]) = [T_0, G_1] \cup [G_1, T^3_0] = [E_{1}, E_{2}]
\cup [E_{2}, E_3] \subset R^+.$$ Hence $$P \in (E_1, E_2)
\ \Rightarrow \ \la p_{\h 0} = \lpinf -+,$$
$$P \in (E_2, E_3) \ \Rightarrow \ \la p_{\h 0} = \lpinf --$$
(see Figure \ref{fig.BP1}).

\begin{figure}[h]
	%\begin{center}
	{
	\centering
	\begin{tikzpicture}
	\tikzstyle{every node}=[draw, circle, fill=white, minimum size=1pt, inner sep=0pt]
	\tikzstyle{dot}=[circle, fill=white, minimum size=0pt, inner sep=0pt, outer sep=-1pt]
	\node (n1) at (5*4/3,0) {};
	\node (n2) at (-5*4/3,5*2/3)  {};
	\node (n3) at (-5*2/3,-5*2/3)  {};
	\node (n4) at (5*5/9,0)  {};
	\node (n5) at (-5*5/6,-5*1/3) {};
	\node (n6) at (-5*19/24,-5*5/12)  {};
	\node (n7) at (5*1/36,5*5/18)  {};
	\node (n8) at (5*61/51,0)  {};
	\node (n9) at (5*65/75,0)  {};

	\node (n10) at (5*4/9,5*2/9)  {};

	\tikzstyle{every node}=[draw, circle, fill=white, minimum size=0.1pt, inner sep=0pt]
	\node (n16) at (-7,0) {};
	\node (n17) at (7,0)  {};
	\node (n18) at (0,4)  {};
	\node (n19) at (0,-4)  {};

	\foreach \from/\to in {n1/n2,n1/n3, n2/n4,n5/n4, n8/n7,n7/n9, n3/n8,n9/n6, n16/n17,n18/n19}
	\draw (\from) -- (\to);

	\node[dot, draw=none, label=above: $T_0$] at (5*4/3,0) {};
	\node[dot, draw=none, label=above: $G_0$] at (0,2.35) {};
	\node[dot, draw=none, label=above: $T^2_0$] at (-5*4/3,5*2/3) {};
	\node[dot, draw=none, label=above: $G_{-1}$] at (0,1.2) {};
	\node[dot, draw=none, label=above: $T_{-1}$] at (5*5/9,0) {};
	\node[dot, draw=none, label=above: $G_{-2}$] at (0,0) {};
	\node[dot, draw=none, label=above: $T^4_0$] at (-5*5/6,-5*1/5) {};
	\node[dot, draw=none, label=above: $T^5_0$] at (-5*19/24,-5*5/12) {};
	\node[dot, draw=none, label=above: $G_{3}$] at (0,-1) {};
	\node[dot, draw=none, label=above: $T_2$] at (5*65/75,0) {};
	\node[dot, draw=none, label=above: $T^2_{-1}$] at (0,1.7) {};
	\node[dot, draw=none, label=above: $T_1$] at (5*61/51,0) {};
	\node[dot, draw=none, label=above: $T^3_0$] at (-5*2/3,-5*2/3) {};
	\node[dot, draw=none, label=above: $G_{1}$] at (0,-2.2) {};
	\node[dot, draw=none, label=above: $G_{2}$] at (0,-1.6) {};
	\node[dot, draw=none, label=above: $X$] at (2.25,1.7) {};

	\node[dot, draw=none, label=above: $I_{-}$] at (-3,3) {};
	\node[dot, draw=none, label=above: $I_{\emptyset}$] at (3.3,1.4) {};
	\node[dot, draw=none, label=above: $I_{-+-}$] at (-3,2.3) {};
	\node[dot, draw=none, label=above: $I_{-++}$] at (1.2,0.9) {};
	\node[dot, draw=none, label=above: $I_{--+}$] at (1.2,0.4) {};
	\node[dot, draw=none, label=above: $I_{---}$] at (-2,-0.4) {};
	\node[dot, draw=none, label=above: $I_{-+}$] at (3.3,-1.1) {};
	\node[dot, draw=none, label=above: $I_{--}$] at (-1.3,-2.6) {};

	\end{tikzpicture}}

	\unitlength=10mm

	\begin{picture}(20,4)(3,4.4)

%	\put(12.25, 13.86){\circle*{0.08}}
        \put(12.21, 13.56){\circle*{0.1}}
	\put(8.3,6){\circle*{0.1}}
	\put(8.3,5.3){$X$}

	\put(3,6){\line(1,0){14}}

	\put(3,5.9){\line(0,1){0.2}}\put(2.7,5.3){$T_0^4$}\put(3,6.3){$_{---}$}
	\put(4,5.9){\line(0,1){0.2}}\put(3.6,5.3){$G_{-2}$}\put(4,6.3){$_{--+}$}
	\put(5,5.9){\line(0,1){0.2}}\put(4.6,5.3){$T_{-1}$}\put(5,6.3){$_{-++}$}
	\put(6,5.9){\line(0,1){0.2}}\put(5.6,5.3){$G_{-1}$}\put(6,6.3){$_{-+-}$}
	\put(7,5.9){\line(0,1){0.2}}\put(6.7,5.3){$T^2_0$}\put(7.4,6.3){$_{-}$}
	\put(8,5.9){\line(0,1){0.2}}\put(7.5,5.3){$G_0$}\put(8.4,6.3){$_{\emptyset}$}
	\put(9,5.9){\line(0,1){0.2}}\put(8.8,5.3){$T_0$}\put(9.2,6.3){$_{-+}$}
	\put(10,5.9){\line(0,1){0.2}}\put(9.7,5.3){$G_1$}\put(10.2,6.3){$_{--}$}
	\put(11,5.9){\line(0,1){0.2}}\put(10.7,5.3){$T_0^3$}\put(11,6.3){$_{-+--}$}
	\put(12,5.9){\line(0,1){0.2}}\put(11.7,5.3){$G_2$}\put(12,6.4){$_{-+-+}$}
	\put(13,5.9){\line(0,1){0.2}}\put(12.8,5.3){$T_1$}\put(13,6.3){$_{-+++}$}
	\put(14,5.9){\line(0,1){0.2}}\put(13.6,5.3){$T_{-1}^2$}\put(14,6.4){$_{--++}$}
	\put(15,5.9){\line(0,1){0.2}}\put(14.8,5.3){$T_2$}\put(15,6.3){$_{---+}$}
	\put(16,5.9){\line(0,1){0.2}}\put(15.7,5.3){$G_3$}\put(16,6.4){$_{----}$}
	\put(17,5.9){\line(0,1){0.2}}\put(16.7,5.3){$T_0^5$}

	\put(9,6.5){$\overbrace{\qquad \qquad \ \ \, }$}
	\put(3,5.1){$\underbrace{\qquad \qquad \ \ \, }$}
	\put(5,5.1){$\underbrace{\qquad \qquad \ \  \,}$}
	\put(11,6.6){$\overbrace{\qquad \qquad \ \ \, }$}
	\put(15,6.6){$\overbrace{\qquad \qquad \ \ \, }$}
	\put(7.5,6.5){\line(0,1){0.5}}
	\put(7.5,7){\line(1,0){2.5}}
	\put(10,7){\vector(0,-1){0.2}}

	\put(9.5,5.6){\line(0,-1){1}}
	\put(9.5,4.6){\line(-1,0){3.5}}
	\put(6,4.6){\vector(0,1){0.2}}

	\put(10.5,5.6){\line(0,-1){1.2}}
	\put(10.5,4.4){\line(-1,0){6.5}}
	\put(4,4.4){\vector(0,1){0.4}}

	\put(6.5,6.5){\line(0,1){0.7}}
	\put(6.5,7.2){\line(1,0){5.5}}
	\put(12,7.2){\vector(0,-1){0.3}}

	\put(5.5,6.5){\line(0,1){0.9}}
	\put(5.5,7.4){\line(1,0){8}}
	\put(13.5,7.4){\vector(0,-1){0.9}}

	\put(4.5,6.5){\line(0,1){1.1}}
	\put(4.5,7.6){\line(1,0){10}}
	\put(14.5,7.6){\vector(0,-1){1}}

	\put(3.5,6.5){\line(0,1){1.3}}
	\put(3.5,7.8){\line(1,0){12.5}}
	\put(16,7.8){\vector(0,-1){0.9}}

	\thicklines

	\put(7,5.99){\line(1,0){1}}
	\put(7,6.01){\line(1,0){1}}
	\put(9,5.99){\line(1,0){2}}
	\put(9,6.01){\line(1,0){2}}

	\end{picture}

\caption{Basic points and basic arcs for $a=1.75$ and $b=0.5$.}
\label{fig.BP1}
\end{figure}

If in addition $T^4_{0,x} < 0$ (and $T_{-1,x} > 0$, which holds for
all $(a,b) \in S$, implying $T_{i,x} > 0$ for all $i \in \Z$), we have
$$L^2([E_{-1}, E_0]) = L([T_0, G_1]) \cup L([G_1, T_0^3]) = [T_0^2,
T_{-1}] \cup [T_{-1}, T_0^4] \subset R^-.$$ Since $T_0^2$ and
$T_0^4$ are in the left half-plane and $T_{-1}$ is in the right
half-plane, we have $[T_0^2, T_{-1}] \ni G_{-1}$ and $[ T_{-1}, T_0^4]
\ni G_{-2}$, implying
\begin{eqnarray*}
	L^2([E_{-1}, E_0]) &=& [T_0^2, G_{-1}] \cup [G_{-1}, T_{-1}] \cup
	[T_{-1}, G_{-2}] \cup [G_{-2}, T_0^4]\\ &=& [E_{-1}, E_{-2}] \cup
	[E_{-2}, E_{-3}] \cup [E_{-3}, E_{-4}] \cup [E_{-4}, E_{-5}] \subset
	R^-.
\end{eqnarray*}
Hence
$$P \in (E_{-1}, E_{-2}) \ \Rightarrow \ \la p_{\h 0} = \lpinf -+-,$$
$$P \in (E_{-2}, E_{-3}) \ \Rightarrow \ \la p_{\h 0} = \lpinf -++,$$
$$P \in (E_{-3}, E_{-4}) \ \Rightarrow \ \la p_{\h 0} = \lpinf --+,$$
$$P \in (E_{-4}, E_{-5}) \ \Rightarrow \ \la p_{\h 0} = \lpinf ---$$
(see Figure \ref{fig.BP1}).

Continuing this procedure, we can code basic arcs $[E_i, E_{i+1}]$, 
$i \in \Z$, with finite words of $-$s and $+$s in
the following way. Since the map $L$ on $R$ is expanding, for every
basic arc $J$ as above there exists the smallest $n$ such that
$L^{-n-1}(J)\subset [G_0,T_0]$. For points $P\in J$ we have then
$\la p_{\h 0} = \lpinf a_{-n} \dots a_{-1} a_0 = \lpinf \a.$ In
this case we will use the following notation: $I_{\a} := [E_i,
  E_{i+1}]$. In particular, $[E_0, E_1] = I_\emptyset$.

Observe that for every $m \in \N, m \ge 1$, all basic arcs of
$L^{m-1}(I_-)$ are coded by words of length $m$. Moreover, if $m$ is
even, $L^{m-1}(I_-) \subset R^+$, and if $m$ is odd, $L^{m-1}(I_-)
\subset R^-$.

\section{Orders}\label{sec:orders}

Let us look at the points of $R$ and their itineraries. On $R$ (when we
think about it as the real line) we have the natural order $\tl$. We
want to introduce a corresponding order in the itineraries. Since the
situation is similar as for unimodal interval maps, we start with the
usual \emph{parity-lexicographical order}. Let $\ra p = p_0p_1 \dots$,
$\ra q = q_0q_1 \dots$ be two different right-infinite sequences or
finite words. Let $k \in \N_0$ be the smallest integer such that $p_k
\ne q_k$. Then $\ra p \prec \ra q$ if either $p_0 \dots p_{k-1}$ is
even (contains an even number of $+$s) and $p_k < q_k$, or $p_0 \dots
p_{k-1}$ is odd (contains an odd number of $+$s) and $q_k < p_k$.
Here, if $k = 0$, then $p_0 \dots p_{k-1}$ is the empty word, and $-1
< +1$, that is $- \prec +$ (if $p_k = \pm$, or $q_k = \pm$, then by
convention $- \prec \pm \prec +$). (We also allow that $\ra p = p_0p_1
\dots p_n$ is a finite word and $\ra q = q_0q_1 \dots$ is a
right-infinite sequence, or vice versa, and in this case we say that
$\ra p \prec \ra q$ if $p_0p_1 \dots p_n \prec q_0q_1 \dots q_n$.)
While this does not work if the lengths of $\ra p$ and $\ra q$ are
different and one of them is the beginning of the other one, we will
not encounter such situations.

When we want to define some reasonable order in $\Lambda$, we have to
use similar ideas as in the relativity theory (in the space-time).
Recall that we have a forward invariant unstable cone of directions.
(see~\cite{M}). We will call two distinct points $P,Q\in\Lambda$
\emph{comparable} if the direction of the straight line containing 
$P$ and $Q$ belongs to
this cone.

\begin{lem}\label{l:cone}
Assume that $P,Q\in\Lambda$ are comparable. Then $\ra p_{\h 0} \prec
\ra q_{\h 0}$ if and only if the $x$-coordinate of $P$ is smaller than
the $x$-coordinate of $Q$.
\end{lem}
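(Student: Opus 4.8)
The plan is to reduce the two‑dimensional comparison to the one‑dimensional one along the unstable manifold $R$, exploiting the forward invariance of the unstable cone and the expansion of $L$ on $R$. The core observation is that for two comparable points $P,Q\in\Lambda$ the segment joining them has direction in the unstable cone, and applying $L$ repeatedly keeps this direction in the cone while expanding the segment; meanwhile $R=\overline{W^u_X}$ is tangent (in the broken‑line sense) to the cone everywhere. So intuitively $P$ and $Q$ behave like two points lying on one strand of $R$, ordered by $\tl$, and Lemma~\ref{l:cone} for points actually on $R$ reduces to tracking how the coding of basic arcs in Section~\ref{sec:ba} sits relative to the $x$‑coordinate. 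First I would verify the statement for $P,Q$ lying on a common arc of $R$ contained in one half‑plane: there $R$ is a graph over the $x$‑axis in cone direction, so $\tl$ agrees with increasing $x$‑coordinate on $R^+$, with decreasing $x$‑coordinate on $R^-$ (because of how $\tl$ was defined on $R^-$), and one checks directly against the parity‑lexicographical order using the basic‑arc coding: on $R^+$ an even word governs the ordering and on $R^-$ an odd word, which is exactly the parity bookkeeping built into $\prec$.

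\textbf{Step 2: the general comparable pair via iteration.} Given arbitrary comparable $P,Q\in\Lambda$, I would look at the sem‑infinite forward orbits and use that the segment $\overline{PQ}$, under $L^n$, stays a segment with direction in the unstable cone and with length growing without bound (cone expansion, as in~\cite{M}). The signs $p_n,q_n$ of the two orbits are then determined by on which side of the $y$‑axis the endpoints of $L^n(\overline{PQ})$ lie. Let $k$ be the first index where $p_k\neq q_k$; for $n<k$ both endpoints of $L^n(\overline{PQ})$ are on the same side, the map is affine there, and the segment direction and orientation evolve in a controlled way — each application of $L$ to a cone‑direction segment in a fixed half‑plane is a fixed linear map, so it preserves or reverses the left–right orientation according to the sign $p_n$, which is precisely one $\pm$ of parity. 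Tracking the orientation of $L^k(\overline{PQ})$ relative to the $x$‑axis, I get: the endpoint with smaller $x$‑coordinate at stage $0$ has $p_k<q_k$ iff the accumulated parity $p_0\cdots p_{k-1}$ is even, and $p_k>q_k$ iff it is odd. That is exactly the definition of $\ra p_{\h 0}\prec\ra q_{\h 0}$.

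\textbf{Step 3: handling the gluing‑point ambiguity and boundary cases.} One must be careful when $\overline{PQ}$ meets the $y$‑axis or when some $L^n(P)$ or $L^n(Q)$ lands exactly on the $y$‑axis, so that the sign is $\pm$. Here I would invoke the convention $-\prec\pm\prec+$ and the fact (set up before Lemma~\ref{approx}) that such a coordinate corresponds to a gluing point, where $R$ merely turns rather than crosses; the cone condition on $\overline{PQ}$ forces the two points onto opposite sides in the genuinely decisive coordinate, so the first real disagreement index $k$ is well defined and the argument of Step 2 applies there. If the segment $\overline{PQ}$ itself straddles the $y$‑axis at stage $0$, then $k=0$ and the claim is immediate: the point in the left half‑plane has smaller $x$‑coordinate and symbol $-$, the other has $+$, and $-\prec+$.

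\textbf{The main obstacle} I anticipate is Step 2: making rigorous the claim that the direction of $L^n(\overline{PQ})$ stays in the unstable cone and that its left–right orientation flips exactly according to the sign $p_n$. This needs the explicit linear pieces $DL$ on the two half‑planes together with the cone estimates from~\cite{M} (the inequalities defining $S$), showing that on the right half‑plane $L$ preserves orientation along cone directions and on the left half‑plane it reverses it — which is the geometric source of the parity rule. Once that linear‑algebra lemma is in hand, the rest is the bookkeeping already implicit in the definition of $\prec$ and in the coding of basic arcs, and the proof closes by induction on the first disagreement index $k$.
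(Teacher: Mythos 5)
Your overall strategy is the same as the paper's: iterate the segment $\overline{PQ}$, use invariance of the unstable cone plus uniform expansion and boundedness of $\Lambda$ to force the endpoints of $L^n(\overline{PQ})$ onto opposite sides of the $y$-axis at the first index $k$ with $p_k\ne q_k$, and track how each application of $L$ flips the left--right order of the endpoints so that the bookkeeping reproduces the parity-lexicographical order. Step 1 (reducing to points on $R$ and invoking the basic-arc coding) is an unnecessary detour --- the lemma concerns arbitrary comparable points of $\Lambda$, and your Step 2 already treats that case directly, exactly as the paper does.

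There is, however, one concrete error in the fact you single out as ``the main obstacle'': you assert that $L$ \emph{preserves} the $x$-orientation of cone directions on the right half-plane and \emph{reverses} it on the left half-plane. It is the other way around. On the right half-plane $DL=\left(\begin{smallmatrix}-a&1\\ b&0\end{smallmatrix}\right)$, so a cone vector $(1,s)$ (with $|s|$ small) is sent to a vector with $x$-component $-a+s<0$: the order reverses when $p_i=+$. On the left half-plane $DL=\left(\begin{smallmatrix}a&1\\ b&0\end{smallmatrix}\right)$ and the $x$-component $a+s$ stays positive: the order is preserved when $p_i=-$. This is what the paper states, and it is the only version compatible with the conclusion you (correctly) wrote at the end of Step~2, since the parity-lexicographical order counts the number of $+$s, not $-$s, in $p_0\dots p_{k-1}$. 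As written, your final paragraph would yield a parity rule governed by the number of $-$s and the proof would close with the wrong order; fixing the sign in that one computation repairs the argument and makes it coincide with the paper's proof.
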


\begin{proof}
The map $L$ expands distances on straight lines whose direction is in
the unstable cone. The expansion factor is at least a constant larger
than 1 dependent only of $a$ and $b$. Moreover, the unstable cone is
mapped to itself by the derivative of $L$ and $\Lambda$ is bounded
(see~\cite{M}). Therefore, there is $n\in N_0$ such that $L^n$ is
linear on the straight line segment with endpoints $P$, $Q$ and one of
the points $L^n(P)$, $L^n(Q)$ lies in the left half-plane, while the
other one lies in the right half-plane. The smallest such $n$ is
exactly the smallest $n\ge 0$ for which $p_n\ne q_n$. For each $i$
between 0 and $n-1$ the order in the $x$-direction between the points
$L^{i+1}(P)$ and $L^{i+1}(Q)$ is opposite to the analogous order
between $L^i(P)$ and $L^i(Q)$ if $p_i=+$, and the same if $p_i=-$.
Comparing this with the definition of the parity-lexicographical
order gives the result described in the lemma.
\end{proof}

Note that the direction of the local unstable manifold of $X$ is in
the unstable cone, so by the invariance of the unstable cone we get
that the direction of every straight line segment contained in $R$ is
contained in the invariant cone. In particular, we get immediately the
following result.

\begin{lem}\label{l:plorder}
Assume that $P,Q\in[G_0,T_0]$ and $P\ne Q$. Then $\ra p_{\h 0} \prec
\ra q_{\h 0}$ if and only if $P\tl Q$.
\end{lem}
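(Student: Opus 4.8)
The plan is to derive Lemma~\ref{l:plorder} as a direct specialization of Lemma~\ref{l:cone}. The two statements are almost identical; the only gap is that Lemma~\ref{l:cone} requires $P$ and $Q$ to be \emph{comparable}, meaning that the direction of the straight line through $P$ and $Q$ lies in the forward invariant unstable cone, while Lemma~\ref{l:plorder} hypothesizes merely that $P,Q\in[G_0,T_0]$. So the whole proof reduces to one observation: any two distinct points of the basic arc $[G_0,T_0]=I_\emptyset$ are comparable.

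First I would recall that $[G_0,T_0]\subset R=W^u_X$, and that $R$ near $X$ is the local unstable manifold of $X$, whose direction lies in the unstable cone — this is exactly the remark made in the paragraph immediately preceding the lemma. Next, since $R$ is obtained from the local unstable manifold by iterating $L$, and the derivative of $L$ (on either open half-plane, where it is linear) maps the unstable cone into itself, every straight-line segment contained in $R$ has direction in the invariant cone. In particular the segment $[G_0,T_0]$ — which by the analysis in Section~\ref{sec:ba} is a single affine segment, since it lies in the closed right half-plane and contains no gluing point in its interior — has direction in the cone, and hence so does the segment joining any two of its points. (Even without affineness of the whole arc, one can first reduce to a subsegment near a point and use that each affine piece has cone direction; but here $[G_0,T_0]$ is genuinely a line segment, so this is immediate.) Therefore $P$ and $Q$ are comparable.

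Having established comparability, I would invoke Lemma~\ref{l:cone} directly: $\ra p_{\h 0}\prec\ra q_{\h 0}$ if and only if the $x$-coordinate of $P$ is smaller than that of $Q$. It remains only to translate ``$x$-coordinate of $P$ smaller than $x$-coordinate of $Q$'' into ``$P\tl Q$'' for points of $[G_0,T_0]$. For this I would note that $G_0=L^{-1}(Z)$ lies on the $y$-axis (so $G_{0,x}=0$) and $T_0=Z$ lies on the positive $x$-axis, and that the affine segment $[G_0,T_0]$ is parametrized monotonically in the $x$-coordinate as one moves from $G_0$ to $T_0$; since the order $\tl$ on $R^+$ was defined so that $P\tl P'$ when $[X,P]\subset[X,P']$, and $G_0\tl T_0$ are consecutive basic points with $G_0$ the one closer to $X$ along $R$, moving in the $\tl$-direction along $[G_0,T_0]$ corresponds to increasing arc length from $G_0$, hence to increasing $x$-coordinate. (If one wished to avoid even this much, one could alternatively argue that the direction of $[G_0,T_0]$ in the unstable cone has strictly positive $x$-component, which is part of what it means for the cone to be the unstable cone of the Lozi map for $(a,b)\in S$.) Combining the two equivalences yields $\ra p_{\h 0}\prec\ra q_{\h 0}\iff P\tl Q$.

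The only real obstacle is the comparability reduction, i.e.\ making precise that $[G_0,T_0]$ is a straight segment with direction in the invariant cone; but this is already spelled out in the remark before the lemma and follows from the cone invariance under $DL$ established in~\cite{M} together with the structure of $R$ near $X$. Everything else is bookkeeping about the definition of $\tl$ and the signs of coordinates of $G_0$ and $T_0$. I would therefore expect the written proof to be only a few lines.
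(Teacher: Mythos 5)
Your proposal is correct and follows exactly the route the paper intends: the paper gives no separate proof of Lemma~\ref{l:plorder}, deriving it ``immediately'' from Lemma~\ref{l:cone} together with the remark that every straight segment of $R$ (in particular the basic arc $[G_0,T_0]=I_\emptyset$) has direction in the invariant unstable cone. Your additional bookkeeping translating the $x$-coordinate comparison into the order $\tl$ on $[G_0,T_0]$ is the right (and only) remaining detail, and it is handled correctly.
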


This allows us to relate the orders $\prec$ and $\tl$.

Observe that a point $P\in R$ belongs to $[G_0,T_0]$ if and only if
$\la p_{\h 0}=\lpinf$. Let us define the \emph{generalized
  parity-lexicographical order} on the set $\Sigma_R$ in the following
way. Let $\bar p, \bar q \in \Sigma_R$, $\bar p \ne \bar q$. Let $n
\in \N$ be a positive integer such that $\la p_{\h -n} = \la q_{\h -n}
= \lpinf$. Then $\bar p \prec \bar q$ if either
\begin{enumerate}
	\item $n$ is even and $\ra q_{\h -n+1} \prec \ra p_{\h -n+1}$, or
	\item $n$ is odd and $\ra p_{\h -n+1} \prec \ra q_{\h -n+1}$.
\end{enumerate}
By the definition of the parity-lexicographical order and since $L$
reverses orientation on $[G_0,T_0]$, this order is well defined (it
does not depend on the choice of $n$).

{}From this definition and Lemma~\ref{l:plorder} we get immediately
the following result.

\begin{lem}\label{l:order of points}
Let $P, Q \in R$ be two different points and let $\bar p, \bar q$ be
their itineraries. Then $P \tl Q$ if and only if $\bar p \prec \bar
q.$
\end{lem}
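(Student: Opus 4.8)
The plan is to reduce the statement about arbitrary points of $R$ to the case already handled in Lemma~\ref{l:plorder}, namely points lying in the base arc $[G_0,T_0]=I_\emptyset$. The key observation is that for any point $P\in R$ with itinerary $\bar p$, there is an integer $n\ge 0$ such that $\la p_{\h -n}=\lpinf$ (this was established in the text right after the definition of $\bar x$: since $R=W^u_X$, every left tail is eventually all $+$s), and $\la p_{\h -n}=\lpinf$ holds precisely when $L^{-n}(P)\in[G_0,T_0]$. So, given two distinct points $P,Q\in R$ with itineraries $\bar p,\bar q$, I would first pick $n$ large enough that both $\la p_{\h -n}=\lpinf$ and $\la q_{\h -n}=\lpinf$, so that $P':=L^{-n}(P)$ and $Q':=L^{-n}(Q)$ both lie in $[G_0,T_0]$. (Here one should note $P'\ne Q'$, since $L$ is a homeomorphism.)

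Next I would relate the order $\tl$ for $P,Q$ to the order $\tl$ for $P',Q'$, and similarly relate $\prec$ for $\bar p,\bar q$ to $\prec$ for their shifted right tails. For the geometric side: the map $L|_R$ reverses orientation on $[G_0,T_0]$ (since $L(I_\emptyset)=[T_0^2,G_0]\cup[G_0,T_0]$ with $G_0$ in the interior, and $L([E_0,E_1])\ni G_0$, i.e.\ the two halves get folded), so each application of $L$ on the relevant arc flips $\tl$; iterating, $P'\tl Q'$ if and only if $P\tl Q$ when $n$ is even, and $P'\tl Q'$ if and only if $Q\tl P$ when $n$ is odd. On the symbolic side, this is exactly the clause structure built into the definition of the generalized parity-lexicographical order: $\bar p\prec\bar q$ iff ($n$ even and $\ra q_{\h -n+1}\prec\ra p_{\h -n+1}$) or ($n$ odd and $\ra p_{\h -n+1}\prec\ra q_{\h -n+1}$), and the right tails $\ra p_{\h -n+1},\ra q_{\h -n+1}$ are (after reindexing by the shift) precisely the forward itineraries $\ra{p'}_{\h 0},\ra{q'}_{\h 0}$ of $P',Q'$ read from coordinate $0$. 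The text has already noted that this order is well defined, independent of the choice of such $n$, so I am free to take $n$ as large as needed.

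Finally I would invoke Lemma~\ref{l:plorder}: since $P',Q'\in[G_0,T_0]$ and $P'\ne Q'$, we have $P'\tl Q'$ if and only if $\ra{p'}_{\h 0}\prec\ra{q'}_{\h 0}$. Chaining the three equivalences — ($P\tl Q$ versus $P'\tl Q'$, governed by the parity of $n$), (Lemma~\ref{l:plorder}), and (the parity clauses in the definition of $\prec$, again governed by the parity of $n$) — the two parity reversals cancel in both the even and odd cases, yielding $P\tl Q$ if and only if $\bar p\prec\bar q$. The main thing to be careful about is the bookkeeping of orientation reversals: one must check that the parity of $n$ enters the geometric side and the symbolic side in a matching way so that they cancel, rather than compound. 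A secondary subtlety is making sure the shift-reindexing of the right tail $\ra p_{\h -n+1}$ correctly matches the forward itinerary of $L^{-n}(P)$ with its coordinate $0$ at position $-n$; this is immediate from $\pi\circ L=\sigma\circ\pi$ but should be stated. The degenerate situations flagged after the definition of $\prec$ (where $p_k=\pm$, i.e.\ $L^k(P)$ is a gluing point and $P$ has two itineraries) do not cause trouble here, since the convention $-\prec\pm\prec+$ is consistent with both of $P$'s itineraries and with $\tl$; one could also simply remark that it suffices to prove the equivalence for one choice of itinerary of each point.
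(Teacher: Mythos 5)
Your proposal is correct and takes the same route as the paper, which offers no written argument at all — it states only that the lemma follows ``immediately'' from the definition of the generalized parity-lexicographical order together with Lemma~\ref{l:plorder}, and your reduction to $[G_0,T_0]$ via $L^{-n}$ with the parity bookkeeping is exactly that reasoning made explicit. The one detail to tighten is the off-by-one you yourself flag: the forward itinerary of $L^{-n}(P)$ from coordinate $0$ is $\ra p_{\h -n}=+\,\ra p_{\h -n+1}$ rather than $\ra p_{\h -n+1}$, and that extra leading $+$ contributes exactly the one additional parity flip that makes the even and odd cases both close up correctly.
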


Another straightforward consequence of Lemma~\ref{l:cone} is the
following lemma. It compares right tails of itineraries of points of
$R$ to the corresponding right tails of itineraries of the turning points.

\begin{lem}\label{l:lessknead}
Assume that $P,Q\in R$, the arc $[P,Q]$ is a straight line segment $($as
a subset of $\Lambda )$, and $Q$ is a turning point. Then $\ra p_{\h 0}
\prec \ra q_{\h 0}$.
\end{lem}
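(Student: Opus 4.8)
The plan is to reduce the statement to Lemma~\ref{l:cone}. First I would check that $P$ and $Q$ are comparable: by hypothesis the arc $[P,Q]\subset R$ is a straight line segment (as a subset of $\Lambda$), and, as observed just before Lemma~\ref{l:plorder}, the direction of every straight line segment contained in $R$ lies in the unstable cone. Hence $P$ and $Q$ are comparable (and in particular $P\ne Q$, since $[P,Q]$ is a nondegenerate segment), so Lemma~\ref{l:cone} applies: $\ra p_{\h 0}\prec\ra q_{\h 0}$ is equivalent to the $x$-coordinate of $P$ being smaller than the $x$-coordinate of $Q$. Thus it suffices to prove $P_x<Q_x$.

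To prove $P_x<Q_x$ I would invoke the turning-point property of $Q$ recorded in Section~\ref{sec:def}: since $Q$ is a turning point, $R$ turns at $Q$, i.e.\ there is $\delta>0$ such that every $W\in R$ with $d(W,Q)<\delta$ satisfies $0<W_x\le Q_x$. Now parametrize the straight segment $[P,Q]$ by arc length, say from $P$ to $Q$; along it the $x$-coordinate is an affine function of the parameter, taking the value $Q_x$ at the endpoint $Q$. The points of $[P,Q]$ sufficiently close to $Q$ lie in the $\delta$-neighborhood of $Q$ in $R$, so there this affine function is $\le Q_x$; being affine and $\le Q_x$ on one side of $Q$, it is $\le Q_x$ on all of $[P,Q]$, attaining its maximum at $Q$. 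It remains to rule out the degenerate possibility $P_x=Q_x$, which would force the affine function to be constant, i.e.\ $[P,Q]$ to be a vertical segment. But the unstable cone contains no vertical direction (otherwise, since $L$ acts as an isometry on vertical segments, it would fail to expand all lines whose direction lies in the cone, contradicting the expansion used in the proof of Lemma~\ref{l:cone}), and $[P,Q]$ has its direction in that cone; hence $[P,Q]$ is not vertical, the $x$-coordinate is \emph{strictly} monotone along $[P,Q]$, and therefore $P_x<Q_x$.

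Combining the two steps, Lemma~\ref{l:cone} yields $\ra p_{\h 0}\prec\ra q_{\h 0}$, as claimed.

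The only point that is not purely formal is the exclusion of vertical directions from the unstable cone; this is where I would expect to have to be a little careful, although it follows immediately from the construction of the cone in \cite{M} (or from the isometry-on-verticals observation above). The rest — the comparability of $P$ and $Q$, and the argument that a turning point is a local maximum of the $x$-coordinate that therefore dominates the whole segment — is routine given the material already developed.
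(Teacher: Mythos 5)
Your proposal is correct and follows exactly the route the paper intends: the paper gives no explicit proof, presenting the lemma as a ``straightforward consequence of Lemma~\ref{l:cone}'', and you supply precisely that reduction (comparability of $P$ and $Q$ via the cone-invariance remark before Lemma~\ref{l:plorder}, plus $P_x<Q_x$ from the local-maximum property of the $x$-coordinate at a turning point stated in Section~\ref{sec:def}, with the vertical direction correctly excluded from the unstable cone).
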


{\bf Arc-codes.} We call a word $\a$ an \emph{arc-code} if there
exists a basic arc $[E_j, E_{j+1}]$ such that $[E_j, E_{j+1}] =
I_{\a}$. Note that, by definition, every arc-code of length $\ge 1$
starts with $-$. Also, in this case, if $I_{\a}$ is a basic arc and
$|\a|$ is even (respectively odd), then $I_{\a} \subset R^+$
(respectively $I_{\a} \subset R^-$).

\begin{lem}\label{l:order of arc-codes}
Let $\a$, $\b$ be two different arc-codes and let $I_{\a}$, $I_{\b}$
be the corresponding basic arcs. If $\a$ and $\b$ have different
lengths, but $|\a|$ and $|\b|$ have the same parity, then $|\a| >|\b|$
if and only if the basic arc $I_{\a}$ is farther from $X$ then the
basic arc $I_{\b}$ $(d(I_{\a}, X) > d(I_{\b}, X))$. If $\a$ and $\b$
have the same length, then $\a \prec \b$ if and only if $d(I_{\a}, X)
> d(I_{\b}, X)$.
\end{lem}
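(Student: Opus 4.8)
The plan is to reduce both statements to facts we already have about the dynamics of $L$ on $R$, in particular the expansion of $L|_R$, the reversal of orientation on $[G_0,T_0]$, and the order correspondence of Lemma~\ref{l:order of points}. First I would set up the basic combinatorial dictionary: if $I_\a$ is a basic arc with arc-code $\a$ of length $|\a|=m$, then by the construction in Section~\ref{sec:ba} we have $L^{-m}(I_\a)\subset[G_0,T_0]=I_\emptyset$, and $L$ maps $[G_0,T_0]$ onto $[T_0^2,T_0]=I_-\cup I_\emptyset$ reversing orientation; more generally $L$ carries the partition of $I_-$ into basic arcs of length $m$ forward to the partition of $L(I_-)$ into basic arcs of length $m+1$, again reversing orientation at each step (since each application of $L$ reverses orientation on each half-plane piece). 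The recursion $R=\bigcup_{i\ge 0}L^i([L(Z),Z])$ together with the observation at the end of Section~\ref{sec:ba} — that for $m\ge 1$ all basic arcs of $L^{m-1}(I_-)$ have codes of length $m$, lying in $R^+$ if $m$ is even and in $R^-$ if $m$ is odd — gives the parity bookkeeping for free.

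For the \textbf{same-length case} I would argue as follows. Suppose $|\a|=|\b|=m$. Both $I_\a$ and $I_\b$ lie in $L^{m-1}(I_-)$, hence on the same side $R^{\pm}$ of $X$, and the order $\tl$ restricted to that side is (by the definition of $\tl$) either ``closer to $X$'' or ``farther from $X$'' monotonically. Concretely, on $R^+$, $P\tl P'$ iff $d(P,X)<d(P',X)$, while on $R^-$, $P\tl P'$ iff $d(P,X)>d(P',X)$; in either case, since the parity of $m$ determines the side, a short check shows $\a\prec\b$ forces $d(I_\a,X)>d(I_\b,X)$. Here the cleanest route is to pick interior points $P\in I_\a$, $Q\in I_\b$; their itineraries satisfy $\la p_{\h 0}=\lpinf\a$ and $\la q_{\h 0}=\lpinf\b$, so comparing $\bar p$ and $\bar q$ via the generalized parity-lexicographical order with $n=m$ reduces (by definition of that order) to comparing $\ra p_{\h -m+1}$ and $\ra q_{\h -m+1}$, i.e.\ to comparing the words $\a$ read from position $-m$, with a parity twist governed by the $\lpinf$ prefix — which for the empty prefix is trivial. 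Then Lemma~\ref{l:order of points} converts $\bar p\prec\bar q$ into $P\tl Q$, and the side information converts $P\tl Q$ into the stated distance inequality. One must be slightly careful that the comparison of $\a$ and $\b$ as finite words (they have equal length, neither a prefix of the other since they differ) is exactly the $\prec$ appearing in the statement; the definition of $\prec$ on finite words handles this.

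For the \textbf{different-length, same-parity case}, say $|\a|=m>k=|\b|$ with $m\equiv k\pmod 2$, the key geometric fact is nesting: since $L$ is expanding on $R$, the basic arcs of length $m$ refine those of length $k$ in the sense that as $i$ increases the basic arc of $L^{i}(I_-)$ containing a fixed point moves monotonically away from $X$ within its side. More precisely, $L^{k}(I_-)$ and $L^{m}(I_-)$ both lie in $R^{\pm}$ (same side, by the parity hypothesis), and $L^{k}(I_-)\supsetneq$ (the $\tl$-initial part of) these longer images is the part nearer $X$ — equivalently, the arcs of $L^{m-1}(I_-)$ lie $\tl$-beyond all arcs of $L^{k-1}(I_-)$ on that side, hence farther from $X$. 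I would make this precise by tracking $d(\,\cdot\,,X)$ under $L$: on each side, $L^2$ is an expanding affine-on-pieces map fixing $X$, so $d(L^2(P),X)>d(P,X)$ for $P\ne X$ on that side, and two more applications of $L$ push any length-$k$ basic arc strictly farther from $X$ than it was, landing among the length-$(k+2)$ arcs. Induction on $(m-k)/2$ then gives $d(I_\a,X)>d(I_\b,X)$, and conversely, since on a fixed side the arcs are linearly ordered by distance and the length is an invariant of which generation they belong to, $d(I_\a,X)>d(I_\b,X)$ forces $|\a|>|\b|$.

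\textbf{Main obstacle.} The delicate point is the different-length case: one must verify that passing from length-$k$ arcs to length-$(k+2)$ arcs under $L^2$ genuinely pushes \emph{every} length-$k$ arc farther from $X$ than its own previous position, i.e.\ that there is no ``overlap'' where a new short-generation arc sits nearer $X$ than an old one. This is where the expansion constant and the fact that $X$ is fixed with $L^2$ expanding on straight segments through it (Lemma~\ref{l:cone} and the cone invariance) do the real work; the bookkeeping that $I_-$ is the basic arc adjacent to $I_\emptyset$ on the far side of $G_0$ — so that every new generation is "spawned" strictly beyond the previous one — must be stated carefully. Once that monotonicity-across-generations claim is nailed down, both directions of both statements follow by combining it with Lemma~\ref{l:order of points} and the elementary description of $\tl$ on $R^+$ and $R^-$.
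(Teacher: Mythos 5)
Your plan is correct, and for the equal-length case it is essentially the paper's argument in different packaging: the paper pulls $I_\a$ and $I_\b$ back into $[G_0,T_0]$ and compares the forward itineraries of the pulled-back points with the plain parity-lexicographical order via Lemma~\ref{l:plorder}, while you invoke the generalized parity-lexicographical order with $n=m$ and Lemma~\ref{l:order of points}; since the generalized order is defined exactly by that pull-back, the two computations coincide. The genuine divergence is in the different-length case. The paper reuses the same pull-back trick: choosing $n\equiv|\a|\equiv|\b|\pmod 2$ with $L^{-n}(I_\a),L^{-n}(I_\b)\subset[G_0,T_0]$, the forward itinerary of a point of $L^{-n}(I_\a)$ begins with exactly $n-|\a|+1$ (an odd number of) symbols $+$ before the leading $-$ of $\a$, so the shorter arc-code yields more leading $+$s, hence a $\prec$-smaller right tail, hence a point closer to $X$ by Lemma~\ref{l:plorder}; no induction over generations is needed. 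You instead prove directly that the generations $L^{m-1}(I_-)$ tile each half $R^{\pm}$ consecutively outward from $X$, using that $L^2$ is an expanding self-homeomorphism of each half fixing $X$, so that $[X,G_0]\subsetneq[X,T_0^{2}]\subsetneq[X,T_0^{4}]\subsetneq\cdots$ by induction. That route is valid and arguably more geometric, since it explains why the generations nest, but it obliges you to establish the monotonicity-across-generations claim you correctly flag as the main obstacle --- precisely the work that the paper's counting-of-leading-$+$s argument sidesteps. Both approaches rest on the same inputs (expansion of $L$ along $R$ and the order dictionary of Lemmas~\ref{l:plorder} and~\ref{l:order of points}), so either can be written up rigorously.
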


\begin{proof}
If $\a$ and $\b$ have different lengths, but $|\a|$ and $|\b|$ have
the same parity, then we take $n$ of the same parity as $|\a|$ and
$|\b|$ and such that $L^{-n}(I_\a)$ and $L^{-n}(I_\b)$ are contained
in $[G_0,T_0]$. Choose $P\in L^{-n}(I_\a)$ and $Q\in L^{-n}(I_\b)$.
%By the parity assumptions, they are both in $R^+$. 
Compare $\ra p_{\h 0}$ with $\ra q_{\h 0}$. By the parity assumptions, 
they both start with the odd number of
$+$s. If $|\a| >|\b|$ then $\ra q_{\h 0}$ starts with more $+$s, so
$\ra q_{\h 0} \prec \ra p_{\h 0}$, and therefore $Q \tl P$. This means
that $I_{\b}$ is closer to $X$ than $I_{\a}$.

If $\a$ and $\b$ have the same length, we make the same construction.
Then $\a \prec \b$ is equivalent to $\ra q_{\h 0} \prec \ra p_{\h 0}$
(remember of the odd number of $+$s in front), and, as before,
$I_{\b}$ is closer to $X$ than $I_{\a}$.
\end{proof}

\section{Three approaches}\label{sec:three}

In this section we will present three approaches to coding the main
information about the unstable manifold $R$ of $X$, foldings of $R$
and the dynamics of $L$ on $R$. Those approaches will be via kneading
sequences, the folding pattern, and the folding tree.

{\bf Kneading sequences.} For each $n\in\Z$ the itinerary $\bar k^n$
of the $n$th turning point $T_n$ is a \emph{kneading sequence}. Let
$$\K := \{\bar k^n : n\in\Z\}$$
be the set of all kneading sequences of $L$. Similarly as for interval
maps, $\K$ contains the information about the basic properties
of $L$. Sometimes we will call $\K$ the \emph{kneading set}.

Strictly speaking, a turning point $T_n$ has two itineraries. They are
of the form $\lpinf \alpha^n \pm \cdot \ra k^n_{\h 0}$, where
$\alpha^n$ is the arc-code of the basic arc containing $L^{-2}(T_n)$.
Here for $\pm$ you can substitute any of $+$ and $-$. Therefore we can
think of this kneading sequence as a pair $(\alpha^n, \ra k^n_{\h
  0})$.

While $\K$ is only a set, we can recover the order in it by looking at
the arc-codes parts of the kneading sequences. Moreover, $\bar k^0$ is
the only kneading sequence with the arc-code part empty. Thus, given
an element $\bar k$ of $\K$ we can determine $n$ such that $\bar
k=\bar k^n$.

{\bf Folding pattern.} Write the sequence
$$(\dots, E_{-3}, E_{-2}, E_{-1}, E_0, E_1, E_2, E_3, \dots),$$
replacing each $E_i$ by the symbol $G$ if $E_i$ is a gluing point and
$T$ if $E_i$ is a turning or postturning point. Add additionally the
symbol $X$ between $E_0$ and $E_1$ (that is, where it belongs). We get
a sequence like
$$(\dots,T,G,T,G,T,G,X,T,G,T,G,T,T,T,G,T,\dots).$$
This is the \emph{folding pattern} of $L$.

The folding pattern carries some additional information, that we can
make visible. We know that $L$ restricted to $R$ is an orientation
reversing homeomorphism that fixes $X$. Moreover, it maps the set of
basic points bijectively onto the set of turning and postturning
points. Thus, we know which symbol of the folding pattern is mapped to
which one (see Figure~\ref{fig:fpmap}).

\begin{figure}[h]
        \unitlength=10mm
        \begin{picture}(20,3.6)(3,4.4)

	\put(8.5,6){\circle*{0.1}}
	\put(8.3,5.3){$X$}

	\put(3,6){\line(1,0){14}}

	\put(3,5.9){\line(0,1){0.2}}\put(2.8,5.3){$T$}
	\put(4,5.9){\line(0,1){0.2}}\put(3.8,5.3){$G$}
	\put(5,5.9){\line(0,1){0.2}}\put(4.8,5.3){$T$}
	\put(6,5.9){\line(0,1){0.2}}\put(5.8,5.3){$G$}
	\put(7,5.9){\line(0,1){0.2}}\put(6.8,5.3){$T$}
	\put(8,5.9){\line(0,1){0.2}}\put(7.8,5.3){$G$}
	\put(9,5.9){\line(0,1){0.2}}\put(8.8,5.3){$T$}
	\put(10,5.9){\line(0,1){0.2}}\put(9.8,5.3){$G$}
	\put(11,5.9){\line(0,1){0.2}}\put(10.8,5.3){$T$}
	\put(12,5.9){\line(0,1){0.2}}\put(11.8,5.3){$G$}
	\put(13,5.9){\line(0,1){0.2}}\put(12.8,5.3){$T$}
	\put(14,5.9){\line(0,1){0.2}}\put(13.8,5.3){$T$}
	\put(15,5.9){\line(0,1){0.2}}\put(14.8,5.3){$T$}
	\put(16,5.9){\line(0,1){0.2}}\put(15.8,5.3){$G$}
	\put(17,5.9){\line(0,1){0.2}}\put(16.8,5.3){$T$}

	\put(9,5){\line(0,-1){0.3}}
	\put(9,4.7){\line(-1,0){2}}
	\put(7,4.7){\vector(0,1){0.3}}

	\put(10,5){\line(0,-1){0.5}}
	\put(10,4.5){\line(-1,0){5}}
	\put(5,4.5){\vector(0,1){0.5}}

	\put(11,5){\line(0,-1){0.7}}
	\put(11,4.3){\line(-1,0){8}}
	\put(3,4.3){\vector(0,1){0.7}}

        \put(8,6.5){\line(0,1){0.3}}
	\put(8,6.8){\line(1,0){1}}
	\put(9,6.8){\vector(0,-1){0.3}}

	\put(7,6.5){\line(0,1){0.5}}
	\put(7,7){\line(1,0){4}}
	\put(11,7){\vector(0,-1){0.5}}

	\put(6,6.5){\line(0,1){0.7}}
	\put(6,7.2){\line(1,0){7}}
	\put(13,7.2){\vector(0,-1){0.7}}

	\put(5,6.5){\line(0,1){0.9}}
	\put(5,7.4){\line(1,0){9}}
	\put(14,7.4){\vector(0,-1){0.9}}

	\put(4,6.5){\line(0,1){1.1}}
	\put(4,7.6){\line(1,0){11}}
	\put(15,7.6){\vector(0,-1){1.1}}

	\put(3,6.5){\line(0,1){1.3}}
	\put(3,7.8){\line(1,0){14}}
	\put(17,7.8){\vector(0,-1){1.3}}

	\end{picture}

	\caption{The action of the map on the folding pattern.}
	\label{fig:fpmap}
\end{figure}
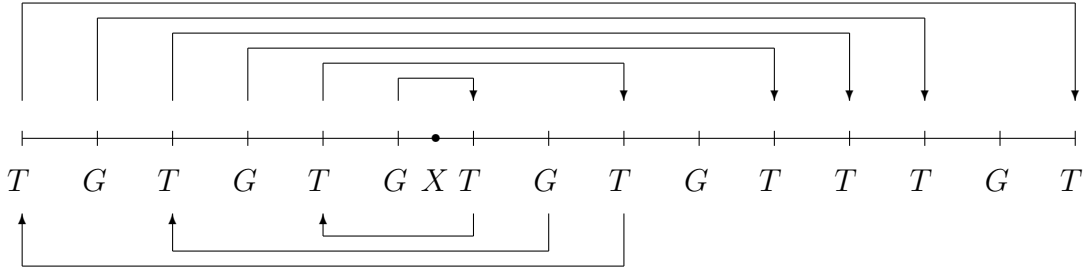

We know how to number the gluing points (the first to the left of $X$
is $G_0$). This, plus the information about the action of the map,
tells us which turning or postturning point is corresponding to a
given symbol $T$. Thus, we get $G$s and $T$s with subscripts and (some
of them) superscripts, like in Figure~\ref{fig.BP1}.

Another piece of information that we can read off the folding pattern
is which turning and postturning points and which basic arcs are in
the left or right half-plane. Namely, we know that the sign (which we
use for the itineraries) changes at every symbol $G$. Thus, we can
append our folding pattern with those signs and get a sequence like
this:
$$\dots -T-G+T+G-T-G+X+T+G-T-G+T+T+T+G-T- \dots\ .$$
For each symbol $T$ the signs adjacent to it from the left and right
are the same, so we can say that this is the sign of this $T$. Note
that it may happen that the corresponding postturning point is
actually on the $y$-axis, but still it has a definite sign.

Of course, we can put some of the additional information together, for
instance we can add to the folding pattern both the map and the signs
(see Figure~\ref{fig:fpmapsigns}).
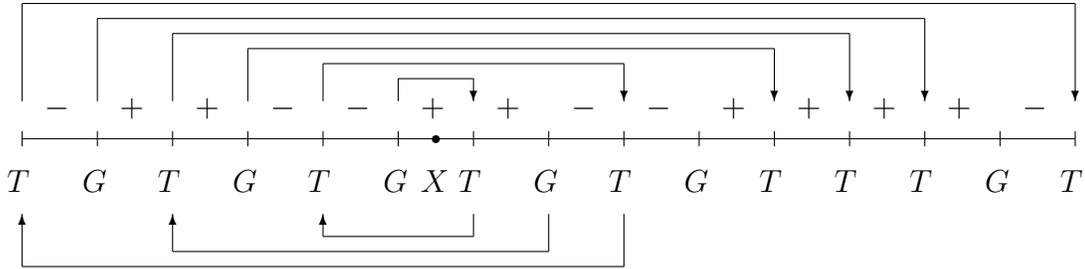
\begin{figure}[h]
        \unitlength=10mm
        \begin{picture}(20,3.6)(3,4.4)

	\put(8.5,6){\circle*{0.1}}
	\put(8.3,5.3){$X$}

	\put(3,6){\line(1,0){14}}

	\put(3,5.9){\line(0,1){0.2}}\put(2.8,5.3){$T$}
	\put(4,5.9){\line(0,1){0.2}}\put(3.8,5.3){$G$}
	\put(5,5.9){\line(0,1){0.2}}\put(4.8,5.3){$T$}
	\put(6,5.9){\line(0,1){0.2}}\put(5.8,5.3){$G$}
	\put(7,5.9){\line(0,1){0.2}}\put(6.8,5.3){$T$}
	\put(8,5.9){\line(0,1){0.2}}\put(7.8,5.3){$G$}
	\put(9,5.9){\line(0,1){0.2}}\put(8.8,5.3){$T$}
	\put(10,5.9){\line(0,1){0.2}}\put(9.8,5.3){$G$}
	\put(11,5.9){\line(0,1){0.2}}\put(10.8,5.3){$T$}
	\put(12,5.9){\line(0,1){0.2}}\put(11.8,5.3){$G$}
	\put(13,5.9){\line(0,1){0.2}}\put(12.8,5.3){$T$}
	\put(14,5.9){\line(0,1){0.2}}\put(13.8,5.3){$T$}
	\put(15,5.9){\line(0,1){0.2}}\put(14.8,5.3){$T$}
	\put(16,5.9){\line(0,1){0.2}}\put(15.8,5.3){$G$}
	\put(17,5.9){\line(0,1){0.2}}\put(16.8,5.3){$T$}

	\put(9,5){\line(0,-1){0.3}}
	\put(9,4.7){\line(-1,0){2}}
	\put(7,4.7){\vector(0,1){0.3}}

	\put(10,5){\line(0,-1){0.5}}
	\put(10,4.5){\line(-1,0){5}}
	\put(5,4.5){\vector(0,1){0.5}}

	\put(11,5){\line(0,-1){0.7}}
	\put(11,4.3){\line(-1,0){8}}
	\put(3,4.3){\vector(0,1){0.7}}

        \put(8,6.5){\line(0,1){0.3}}
	\put(8,6.8){\line(1,0){1}}
	\put(9,6.8){\vector(0,-1){0.3}}

	\put(7,6.5){\line(0,1){0.5}}
	\put(7,7){\line(1,0){4}}
	\put(11,7){\vector(0,-1){0.5}}

	\put(6,6.5){\line(0,1){0.7}}
	\put(6,7.2){\line(1,0){7}}
	\put(13,7.2){\vector(0,-1){0.7}}

	\put(5,6.5){\line(0,1){0.9}}
	\put(5,7.4){\line(1,0){9}}
	\put(14,7.4){\vector(0,-1){0.9}}

	\put(4,6.5){\line(0,1){1.1}}
	\put(4,7.6){\line(1,0){11}}
	\put(15,7.6){\vector(0,-1){1.1}}

	\put(3,6.5){\line(0,1){1.3}}
	\put(3,7.8){\line(1,0){14}}
	\put(17,7.8){\vector(0,-1){1.3}}

        \put(3.3,6.3){$-$}
        \put(4.3,6.3){$+$}
        \put(5.3,6.3){$+$}
        \put(6.3,6.3){$-$}
        \put(7.3,6.3){$-$}
        \put(8.3,6.3){$+$}
        \put(9.3,6.3){$+$}
        \put(10.3,6.3){$-$}
        \put(11.3,6.3){$-$}
        \put(12.3,6.3){$+$}
        \put(13.3,6.3){$+$}
        \put(14.3,6.3){$+$}
        \put(15.3,6.3){$+$}
        \put(16.3,6.3){$-$}

	\end{picture}

	\caption{The action of the map on the folding pattern with
          signs.}
	\label{fig:fpmapsigns}
\end{figure}

{\bf Folding tree.} We can think of the folding pattern as a countable
Markov partition for the map $L$ on $R$. Thus, we can consider the
corresponding Markov graph (the graph of transitions). The vertices of
this graph are the intervals $[E_i,E_{i+1}]$ (we write just the
corresponding number $i$ for them) and there is an arrow from $i$ to
$j$ if and only if $L([E_i,E_{i+1}])\supset [E_j,E_{j+1}]$. From the
folding pattern shown in Figure~\ref{fig:fpmap} we get the graph shown
in Figure~\ref{fig:ftarcs} (of course this tree goes down and is
infinite; we are showing only a part of it). This graph is almost a
tree, so we will call it the \emph{folding tree} of $L$. Except of $0$
and the arrows beginning at $0$, it is a subtree of the full binary
tree.

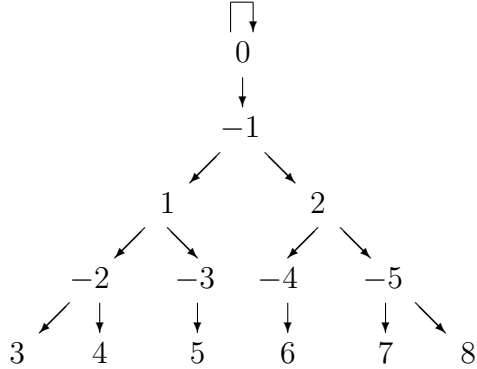
\begin{figure}[h]
        \unitlength=10mm
        \begin{picture}(20,6)(3.5,1.5)

          \put(10,6){$0$}
          \put(9.8,5){$-1$}
          \put(9,4){$1$}
          \put(11,4){$2$}
          \put(7.8,3){$-2$}
          \put(9.2,3){$-3$}
          \put(10.3,3){$-4$}
          \put(11.7,3){$-5$}
          \put(7,2){$3$}
          \put(8.1,2){$4$}
          \put(9.4,2){$5$}
          \put(10.6,2){$6$}
          \put(11.9,2){$7$}
          \put(13,2){$8$}

          \put(9.94,6.4){\line(0,1){0.4}}
          \put(9.94,6.8){\line(1,0){0.3}}
          \put(10.24,6.8){\vector(0,-1){0.4}}

          \put(10.1,5.8){\vector(0,-1){0.4}}
          \put(9.8,4.8){\vector(-1,-1){0.4}}
          \put(10.4,4.8){\vector(1,-1){0.4}}
          \put(8.8,3.8){\vector(-1,-1){0.4}}
          \put(11.4,3.8){\vector(1,-1){0.4}}
          \put(9.1,3.8){\vector(1,-1){0.4}}
          \put(11.1,3.8){\vector(-1,-1){0.4}}
          \put(7.8,2.8){\vector(-1,-1){0.4}}
          \put(12.4,2.8){\vector(1,-1){0.4}}
          \put(8.2,2.8){\vector(0,-1){0.4}}
          \put(9.5,2.8){\vector(0,-1){0.4}}
          \put(10.7,2.8){\vector(0,-1){0.4}}
          \put(12,2.8){\vector(0,-1){0.4}}

        \end{picture}

	\caption{A folding tree with numbers of basic arcs.}
	\label{fig:ftarcs}
\end{figure}

This tree is in a natural way divided into \emph{levels}. The number
$0$ is at level $0$, the number $-1$ is at level $1$, and in general,
if the path from $-1$ to $i$ has $n$ arrows then $i$ is at level
$n+1$. It is easy to see how the levels are arranged. Starting with
level $1$, negative numbers are at odd levels, ordered with their
moduli increasing from the left to the right. If level $n$ ends with
$-i$ then level $n+2$ starts with $-(i+1)$. Positive numbers are at
even levels, ordered in a similar way. Therefore, if we have the same
tree without the numbers, like in Figure~\ref{fig:ftnaked}, we know
where to put which number. Of course, we are talking about the tree
embedded in the plane, so the order of the vertices at each level is
given.

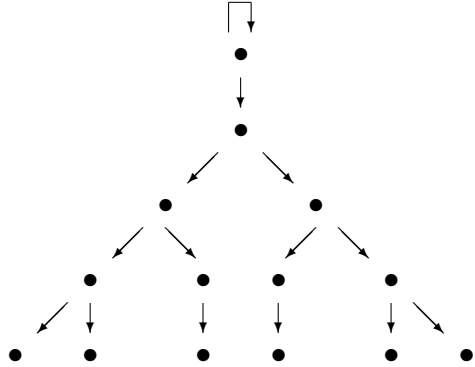
\begin{figure}[h!]
        \unitlength=10mm
        \begin{picture}(20,6)(3.5,1.5)

          \put(10,6){$\bullet$}
          \put(10,5){$\bullet$}
          \put(9,4){$\bullet$}
          \put(11,4){$\bullet$}
          \put(8,3){$\bullet$}
          \put(9.5,3){$\bullet$}
          \put(10.5,3){$\bullet$}
          \put(12,3){$\bullet$}
          \put(7,2){$\bullet$}
          \put(8,2){$\bullet$}
          \put(9.5,2){$\bullet$}
          \put(10.5,2){$\bullet$}
          \put(12,2){$\bullet$}
          \put(13,2){$\bullet$}

          \put(9.94,6.4){\line(0,1){0.4}}
          \put(9.94,6.8){\line(1,0){0.3}}
          \put(10.24,6.8){\vector(0,-1){0.4}}

          \put(10.1,5.8){\vector(0,-1){0.4}}
          \put(9.8,4.8){\vector(-1,-1){0.4}}
          \put(10.4,4.8){\vector(1,-1){0.4}}
          \put(8.8,3.8){\vector(-1,-1){0.4}}
          \put(11.4,3.8){\vector(1,-1){0.4}}
          \put(9.1,3.8){\vector(1,-1){0.4}}
          \put(11.1,3.8){\vector(-1,-1){0.4}}
          \put(7.8,2.8){\vector(-1,-1){0.4}}
          \put(12.4,2.8){\vector(1,-1){0.4}}
          \put(8.1,2.8){\vector(0,-1){0.4}}
          \put(9.6,2.8){\vector(0,-1){0.4}}
          \put(10.6,2.8){\vector(0,-1){0.4}}
          \put(12.1,2.8){\vector(0,-1){0.4}}

        \end{picture}

	\caption{A ``naked'' folding tree.}
	\label{fig:ftnaked}
\end{figure}

In a similar way as for the folding pattern, we can add some
information to the picture. The symbols $G$ and $T$ can be placed
between the vertices of the tree. The ones that are between the last
vertex of level $n$ and the first vertex of level $n+2$, will be
placed to the right of the last vertex of level $n$. The only
exception is $G_0$, which has to be placed to the left of the unique
vertex of level 1, in order to avoid a collision with other symbols.

We know which of the symbols are $G$s. By our construction, $G$s are
those elements of $\E$ that are in the interior of some
$L([E_i,E_{i+1}])$. This means that they are exactly the ones which
are between the siblings (vertices where the arrows from the common
vertex end). And once we have $G$s and $T$s marked, we can recover the
signs of the vertices, because we know that the signs change exactly at
$G$s. Then we get the folding tree marked as in
Figure~\ref{fig:ftall}.

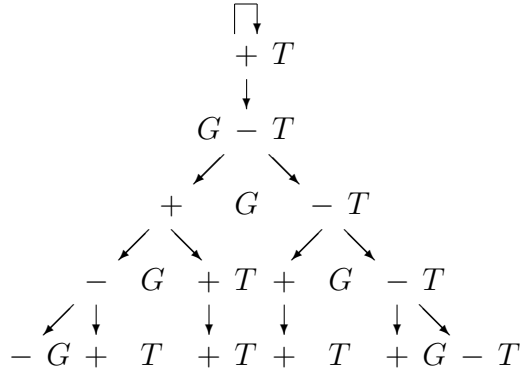
\begin{figure}[h!]
        \unitlength=10mm
        \begin{picture}(20,6)(3.5,1.5)

          \put(9.94,6){$+$}\put(10.44,6){$T$}
          \put(9.94,5){$-$}\put(10.44,5){$T$}\put(9.44,5){$G$}
          \put(8.94,4){$+$}\put(9.94,4){$G$}
          \put(10.94,4){$-$}\put(11.44,4){$T$}
          \put(7.94,3){$-$}\put(8.69,3){$G$}
          \put(9.44,3){$+$}\put(9.94,3){$T$}
          \put(10.44,3){$+$}\put(11.19,3){$G$}
          \put(11.94,3){$-$}\put(12.44,3){$T$}
          \put(6.94,2){$-$}\put(7.44,2){$G$}
          \put(7.94,2){$+$}\put(8.69,2){$T$}
          \put(9.44,2){$+$}\put(9.94,2){$T$}
          \put(10.44,2){$+$}\put(11.19,2){$T$}
          \put(11.94,2){$+$}\put(12.44,2){$G$}
          \put(12.94,2){$-$}\put(13.44,2){$T$}

          \put(9.94,6.4){\line(0,1){0.4}}
          \put(9.94,6.8){\line(1,0){0.3}}
          \put(10.24,6.8){\vector(0,-1){0.4}}

          \put(10.1,5.8){\vector(0,-1){0.4}}
          \put(9.8,4.8){\vector(-1,-1){0.4}}
          \put(10.4,4.8){\vector(1,-1){0.4}}
          \put(8.8,3.8){\vector(-1,-1){0.4}}
          \put(11.4,3.8){\vector(1,-1){0.4}}
          \put(9.1,3.8){\vector(1,-1){0.4}}
          \put(11.1,3.8){\vector(-1,-1){0.4}}
          \put(7.8,2.8){\vector(-1,-1){0.4}}
          \put(12.4,2.8){\vector(1,-1){0.4}}
          \put(8.1,2.8){\vector(0,-1){0.4}}
          \put(9.6,2.8){\vector(0,-1){0.4}}
          \put(10.6,2.8){\vector(0,-1){0.4}}
          \put(12.1,2.8){\vector(0,-1){0.4}}

        \end{picture}

	\caption{A folding tree with $G$s, $T$s and signs.}
	\label{fig:ftall}
\end{figure}

Now that we have our three objects, the kneading sequences, the
folding pattern, and the folding tree, we can prove that they carry
the same information.

\begin{theorem}\label{t:equivalent}
The set of kneading sequences, the folding pattern, and the folding
tree are equivalent. That is, given one of them, we can recover the
other two.
\end{theorem}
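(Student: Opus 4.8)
The plan is to prove a cycle of implications: (a) from the set of kneading sequences one recovers the folding pattern; (b) from the folding pattern one recovers the folding tree; (c) from the folding tree one recovers the set of kneading sequences. Each step will be a finite/inductive reconstruction argument, using the structure already set up in Section~\ref{sec:three}.

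For (a), recall that each kneading sequence $\bar k^n$ can be written as a pair $(\alpha^n, \ra k^n_{\h 0})$, where $\alpha^n$ is the arc-code of the basic arc containing $L^{-2}(T_n)$, and that $\bar k^0$ is the unique one whose arc-code part is empty. By Lemma~\ref{l:order of arc-codes} the arc-codes carry the $\tl$-order of their basic arcs, so from $\K$ we recover the index $n$ attached to every kneading sequence, hence the full $\tl$-ordered list of turning and postturning points. What remains for the folding pattern is to interleave the gluing points $G_m$ in their correct positions among the $T$-symbols. Here I would use the relations already derived in Section~\ref{sec:ba}: $L(G_m)=T_{-m}$, $G_0=E_0$ is immediately left of $T_0=E_1$, and the arc-codes themselves tell us, arc by arc, where a $G$ must be inserted (an arc $I_\alpha\subset R^+$ whose image crosses the $y$-axis is exactly one whose two endpoint-codes have opposite... more precisely, one detects a gluing point inside $L(I_\alpha)$ from the fact that the two children arc-codes $\alpha-$ and $\alpha+$ both occur). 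Equivalently, since $G_m\in R$ is the unique point with $G^{-k}_{m,x}\ne 0$ for all $k\ge 1$ and $G_{m,x}=0$, its location in the $\tl$-order relative to the $T_j$ is determined by comparing tails: using Lemma~\ref{l:order of points} together with the fact that the itinerary of $G_m$ at position $0$ is $\pm$ while the two sequences $\la g_{m,\h -1}+\cdot\ra{}$ and $\la g_{m,\h -1}-\cdot\ra{}$ bracket it. Carrying this out inductively along the already-known ordering of the $T$s places every $G_m$, producing the folding pattern together with the map action (which is forced by $L(E_i)=$ the $i$-th basic point counted appropriately, i.e.\ $L$ sends $[\dots,E_{-1},E_0,E_1,\dots]$ to the $T$-subsequence in the orientation-reversing way shown in Figure~\ref{fig:fpmap}).

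For (b), this is essentially a restatement: the folding pattern \emph{is} a countable Markov partition of $R$ for $L|_R$, and the arrows $i\to j$ (meaning $L([E_i,E_{i+1}])\supset[E_j,E_{j+1}]$) are read directly from the action of the map on the folding pattern, which, as explained in the text, is part of the data carried by the folding pattern. Conversely the folding tree, being this Markov graph with its planar embedding, lets us read off which elements of $\E$ are $G$s (those between siblings) and hence, by propagating signs, the entire decorated pattern of Figure~\ref{fig:ftall}; the level structure then tells us which vertex is which basic arc, so no information is lost either way. Thus folding pattern and folding tree are literally interchangeable, and in particular from the folding tree we recover the folding pattern, and from it (by the discussion preceding Theorem~\ref{t:equivalent}) the decorated folding pattern with subscripts; the turning points $T_n$ then appear with their indices, and their itineraries $\bar k^n=\lpinf\alpha^n\pm\cdot\ra k^n_{\h 0}$ are recovered coordinate by coordinate by following, in the tree, the path determined by successive images $L^j(T_n)$ and recording at each step which half-plane (i.e.\ which sign, already marked in Figure~\ref{fig:ftall}) the image lies in. This gives (c) and closes the cycle.

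The main obstacle I expect is step (a), and specifically the bookkeeping needed to show that the \emph{relative} position of each gluing point among the turning/postturning points is genuinely forced by the kneading set, rather than only the position of the turning points. The turning points' order comes for free from Lemma~\ref{l:order of arc-codes}, but a gluing point $G_m$ is not itself a turning point, so one must argue that its itinerary — equivalently the pair of bracketing itineraries $\la g_{m}{}_{\h -1}\,\pm\cdot\ra g_{m}{}_{\h 0}$ — is determined by $\K$. The cleanest route is to use $L(G_m)=T_{-m}$: the right tail $\ra g_{m}{}_{\h 1}$ equals $\ra k^{-m}_{\h 0}$, which is in $\K$; and the left tail $\la g_{m}{}_{\h 0}$ is the arc-code $\alpha$ of the basic arc $[E_{i},E_{i+1}]$ with $G_m$ as an endpoint, which by the arc-code recovery above is known. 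One then invokes Lemma~\ref{l:order of points} to insert $G_m$ at the unique $\tl$-position consistent with these tails. Verifying that this is consistent and exhaustive (every $G_m$ placed, no ambiguity) is the part that requires care, but it is a finite check at each level combined with the induction already used in Lemma~\ref{l:discrete}.
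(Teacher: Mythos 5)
Your decomposition is the same cycle as the paper's (kneading set $\to$ folding pattern $\to$ folding tree $\to$ kneading set), and your steps (b) and (c) coincide with the paper's almost verbatim. The difference is in step (a). The paper reconstructs the folding pattern by induction on $L^n([E_{-1},E_1])$: at each stage it maps the outermost points forward (marking the new images $T$), uses the kneading sequences to determine the sign of each $T$, and inserts a $G$ between every pair of adjacent $T$s of opposite sign. Your route is more direct: every basic point has the form $L^j(G)$ with $G\in\G$ and $j\ge 0$, i.e., $L^{j-1}(T_n)$ for some $n\in\Z$, so its full itinerary is a shift of a kneading sequence and hence is known from $\K$; Lemma~\ref{l:order of points} then converts the generalized parity-lexicographical order on these itineraries into the spatial order $\tl$ on $R$, which is exactly the folding pattern once the $j=0$ points are marked $G$. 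This is a legitimate alternative and arguably cleaner, since it replaces the induction and the sign-change bookkeeping by a single application of Lemma~\ref{l:order of points} to the (countable, discrete by Lemma~\ref{l:discrete}) set of basic points. One caveat: your first suggested mechanism for locating gluing points --- that a gluing point sits inside $L(I_\alpha)$ exactly when both children arc-codes $\alpha-$ and $\alpha+$ occur --- is circular as stated, because the set of occurring arc-codes is precisely the vertex set of the folding tree, i.e., part of what is being reconstructed from $\K$; your fallback via itinerary comparison avoids this and is the version you should keep and write out in full.
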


\begin{proof} {\it From the kneading set to the folding pattern.}
Suppose we know the set of the kneading sequences and we want to
recover the folding pattern. As we noticed when we defined the
kneading set, we know which kneading sequence is the itinerary of
which point $T_n$. We proceed by induction. First, we know that in
$[E_{-1},E_1]$ there are three points of $\E$ (and $X$), and that they
should be marked from the left to the right $T,G,X,T$. We also know
how they are mapped by $L$. Now suppose that we know the points of
$\E$ in $L^n([E_{-1},E_1])$, how they are marked, and how they are
mapped by $L$. Some of those points (on the left or on the right,
depending on the parity of $n$) are not mapped to the points of this
set. Then we map them to new points, remembering that $L(X)=X$ and
that $L$ is a homeomorphism of $R$ reversing orientation. Those new
points have to be marked as $T$, because $L$ maps $\E$ onto
$\T\cup\widehat{\T}$. Now we use our information about the kneading
sequences. They are the itineraries of the first images of the points
marked $G$, and since we know the action of $L$ on $\E\cap
L^n([E_{-1},E_1]$, this determines the signs of all points marked $T$
in the picture that we have at this moment. We know that the signs
change at each point marked $G$, so we insert such a point between
every pair of $T$s with opposite signs (clearly, there cannot be two
consecutive $G$s). In such a way we get the points of $\E$ in
$L^n([E_{-1},E_1])$, and the information how they are marked, and how
they are mapped by $L$. This completes the induction step.

{\it From the folding pattern to the folding tree.} This we described
when we were defining the folding tree.

{\it From the folding tree to the kneading set.} As we observed, given
a folding tree, we can add to it the information about the signs and
the positions of the $G$ and $T$ symbols. The turning points are the
symbols $T$ placed directly below $G$s, and additionally $T_0$ is the
only symbol in the zeroth row. Now for every $T$ which is a turning
point, we go down along the tree, reading the signs immediately to the
left of the symbols (see Figure~\ref{fig:ftall}). In such a way, we
get the corresponding right tail of the kneading sequence (the signs
do not change at $T$s, so the sign of the vertex immediately to the
left of a given $T$ is the same as the sign of the $x$-coordinate of
the corresponding turning or postturning point). Going back (up) is
even simpler, since in two steps we get to a vertex and just go up the
tree along the edges.
\end{proof}

\section{Symbolic dynamics}\label{sec:symbolic}

When we want to investigate the symbolic system obtained from a Lozi
map by taking the space of all itineraries and the shift on this
space, the basic thing is to produce a tool for checking whether a
given bi-infinite sequence is an itinerary of a point. Remember that we
called such a sequence \emph{admissible}. If a sequence is an
itinerary of a point of $R$, we call it \emph{$R$-admissible}.

Recall that when we considered the itineraries of points of $R$, we
removed some of them, as non-essential, and we were left with the
space $\Sigma_R^e$. We will call the elements of this space
\emph{essential $R$-admissible} sequences. From the definition it
follows that this space is $\sigma$-invariant.

In the case of all admissible sequences the situation is more
complicated. We do not know whether in order to get rid of the
unnecessary, non-essential, sequences it is enough to remove isolated
ones. Thus, we define the space $\Sigma_\Lambda^e$ as the closure of
$\Sigma_R^e$, and call the elements of $\Sigma_\Lambda^e$
\emph{essential admissible} sequences. As the closure of a
$\sigma$-invariant space, the space $\Sigma_\Lambda^e$ is also
$\sigma$-invariant.

We have to show that the essential admissible sequences suffice for
the symbolic description. We know this for $R$, that is, we know that
$\pi(\Sigma_R^e)=R$, but the analogous property for $\Lambda$ requires
some simple topological consideration. We also want to show that the
essential admissible sequences are really essential, that is, we
cannot remove any of them from our symbolic system. Note that by the
definition, $\Sigma_\Lambda^e$ is compact.

\begin{theorem}\label{essential}
We have $\pi(\Sigma_\Lambda^e)=\Lambda$, that is, every point of
$\Lambda$ has an itinerary that is an essential admissible sequence.
Moreover, it is the minimal set with this property. That is, each
compact subset $\Xi$ of $\Sigma_\Lambda$ such that $\pi(\Xi)=\Lambda$,
contains $\Sigma_\Lambda^e$.
\end{theorem}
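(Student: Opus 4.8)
The plan is to prove the two assertions separately. For the first, that $\pi(\Sigma_\Lambda^e) = \Lambda$: recall from Section~\ref{sec:pre} that $\Lambda = \Cl(R)$ (the closure of $W^u_X$), and that $\pi$ is continuous. Since $\pi(\Sigma_R^e) = R$, we have $\pi(\Sigma_\Lambda^e) = \pi(\Cl(\Sigma_R^e)) \supseteq \Cl(\pi(\Sigma_R^e))$ once we know $\pi$ is continuous and $\Sigma_\Lambda^e$ is compact — indeed a continuous image of a compact set is compact, hence closed, so $\pi(\Sigma_\Lambda^e) = \pi(\Cl(\Sigma_R^e))$ is a closed set containing $\pi(\Sigma_R^e) = R$, and therefore contains $\Cl(R) = \Lambda$. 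The reverse inclusion $\pi(\Sigma_\Lambda^e) \subseteq \Lambda$ is immediate since $\Sigma_\Lambda^e \subseteq \Sigma_\Lambda$ and $\pi(\Sigma_\Lambda) = \Lambda$. So the first part is a short topological argument resting on $\Lambda = \Cl(R)$ and compactness.

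For the minimality, I would argue as follows. Let $\Xi \subseteq \Sigma_\Lambda$ be compact with $\pi(\Xi) = \Lambda$. Since $\Sigma_\Lambda^e = \Cl(\Sigma_R^e)$ and $\Xi$ is closed, it suffices to show $\Sigma_R^e \subseteq \Xi$. Take $\bar p \in \Sigma_R^e$ and set $P = \pi(\bar p) \in R$. The key tool is Lemma~\ref{approx}: for each $n$ there is $\bar q^{(n)} \in \Sigma_R^e$ agreeing with $\bar p$ on coordinates $-n,\dots,n$ and such that $\bar q^{(n)}$ is the \emph{only} itinerary of the point $\pi(\bar q^{(n)})$. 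Because that point has a unique itinerary, and $\pi(\Xi) = \Lambda \ni \pi(\bar q^{(n)})$, the sequence $\bar q^{(n)}$ must itself lie in $\Xi$ (any preimage under $\pi$ of $\pi(\bar q^{(n)})$ equals $\bar q^{(n)}$). Now $\bar q^{(n)} \to \bar p$ in the product topology as $n \to \infty$, and $\Xi$ is closed, so $\bar p \in \Xi$. This gives $\Sigma_R^e \subseteq \Xi$, hence $\Sigma_\Lambda^e = \Cl(\Sigma_R^e) \subseteq \Cl(\Xi) = \Xi$, as desired.

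The main obstacle is making sure the logical structure of the uniqueness step in the minimality argument is airtight: one must know that if a point $Q \in \Lambda$ has a \emph{unique} itinerary $\bar q$ and $\pi(\Xi) = \Lambda$, then necessarily $\bar q \in \Xi$ — this uses that $\pi^{-1}(Q) = \{\bar q\}$, which is exactly the content of "$Q$ has a unique itinerary," together with surjectivity of $\pi|_\Xi$. The other point deserving care is the identity $\Lambda = \Cl(R)$ and the fact that $\Sigma_R^e$ is genuinely dense in $\Sigma_\Lambda^e$ by definition, so that no essential admissible sequence is "missed" — but this is built into the definition of $\Sigma_\Lambda^e$ as $\Cl(\Sigma_R^e)$, so it is really just bookkeeping. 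Everything else is standard compactness-and-continuity manipulation.
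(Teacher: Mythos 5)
Your proposal is correct and follows essentially the same route as the paper: the first part is the same compactness-plus-density-of-$R$ argument, and the minimality part uses exactly the paper's key idea that, by Lemma~\ref{approx}, the unique-itinerary sequences are dense in $\Sigma_R^e$ and must lie in any compact $\Xi$ with $\pi(\Xi)=\Lambda$. Your write-up just spells out the convergence $\bar q^{(n)}\to\bar p$ and the identity $\pi^{-1}(Q)=\{\bar q\}$ more explicitly than the paper does.
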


\begin{proof}
Since the set $\Sigma_\Lambda^e$ is compact, so is
$\pi(\Sigma_\Lambda^e)$. It contains $\pi(\Sigma_R^e)=R$, which is
dense in $\Lambda$, so it is equal to $\Lambda$.

Now suppose that $\Xi\subset\Sigma_\Lambda$ is a compact set such that
$\pi(\Xi)=\Lambda$. The itineraries of all points of $R$ which have
only one itinerary have to belong to $\Xi$. By Lemma~\ref{approx},
the set of those itineraries is dense in $\Sigma_R^e$. Since $\Xi$ is
closed, we get $\Sigma_R^e\subset\Xi$, and then
$\Sigma_\Lambda^e\subset\Xi$.
\end{proof}

Now we go back to essential $R$-admissible sequences.

For a bi-infinite path in the folding tree (with signs of vertices),
we will call the corresponding bi-infinite sequence of signs the
\emph{sign-path}.

\begin{theorem}\label{t:admis}
Let $\bar p$ be a bi-infinite sequence of $+$s and $-$s. Then $\bar p$
is essential $R$-admissible if and only if it is a sign-path in the
folding tree.
\end{theorem}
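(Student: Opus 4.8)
The plan is to prove both implications by relating bi-infinite paths in the folding tree to the finite arc-code combinatorics developed in Section~\ref{sec:ba}. The key observation is that a bi-infinite path in the folding tree, read from some vertex $i_0$ going infinitely far down, corresponds (by the Markov-graph interpretation of the folding tree) to a nested sequence of basic arcs $J_0 \supset L^{-1}(J_1) \supset L^{-2}(J_2) \supset \dots$, and going up the tree from $i_0$ recovers the backward history. Since $L|_R$ is expanding (Lemma~\ref{l:discrete} and the surrounding discussion), such a nested intersection $\bigcap_k L^{-k}(J_k)$ is a single point $P \in R$, and conversely every point of $R$ determines such a path once we fix which basic arc each of its images lies in (with the usual ambiguity at gluing points, which is precisely the ambiguity $\Sigma_R^e$ was designed to handle).

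First I would set up the dictionary: for a vertex $i$ of the folding tree, the sign attached to it (as in Figure~\ref{fig:ftall}) is, by construction, the sign of the $x$-coordinates of the points in the basic arc $[E_i, E_{i+1}] = I_\alpha$ — equivalently the last symbol of the arc-code $\alpha$, or equivalently the common value of $p_0$ for $P \in I_\alpha$ with left tail $\la p_{\h 0} = \lpinf\alpha$. The crucial compatibility is that an arrow from $i$ to $j$ in the tree means $L(I_{\alpha^{(i)}}) \supset I_{\alpha^{(j)}}$, and in that case $\alpha^{(j)}$ extends (the shift of) $\alpha^{(i)}$ by one symbol, so that consecutive signs along a downward path are exactly consecutive symbols of a growing arc-code. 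This is essentially the content of the displayed computations in Section~\ref{sec:ba} that produced the labels $I_\emptyset, I_-, I_{-+}, I_{--}, \dots$; I would cite those rather than redo them.

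For the forward direction (essential $R$-admissible $\Rightarrow$ sign-path): given $\bar p \in \Sigma_R^e$, let $P = \pi(\bar p)$. For each $n \in \Z$ the point $L^n(P)$ lies in some basic arc (choosing consistently at gluing points according to the convention defining $\Sigma_R^e$), giving a vertex $i_n$ of the tree; the expansion of $L|_R$ together with the fact that $L$ maps each basic arc onto a union of basic arcs shows the arrows $i_n \to i_{n+1}$ are present, so we get a bi-infinite path, and by the dictionary its sign-path is exactly $\bar p$. For the converse, given a bi-infinite path, the expansion of $L|_R$ forces $\bigcap_k L^{-k}(J_k)$ (with $J_k$ the basic arc at the $k$th downward step from a fixed base vertex) to be a single point $P \in R$; one checks that the backward part of the path is consistent with $P$'s backward itinerary, and reads off that the sign-path is an itinerary of $P$. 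It remains to see it is the \emph{essential} one — but a path in the tree never sits over a single point that has been removed (a removed itinerary corresponds to a degenerate basic arc, which does not occur), so the resulting sequence lies in $\Sigma_R^e$.

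The main obstacle I anticipate is bookkeeping at gluing points: a point $P$ whose orbit hits a gluing point has two itineraries, and correspondingly the folding tree has a branching there, so I must check the two-to-one/one-to-two correspondence between paths and points lines up exactly with the $p_k = \pm$ convention introduced before Lemma~\ref{approx}, and that no genuine loss or duplication occurs. The second, more technical point is justifying that the nested intersection of preimages of basic arcs is nonempty and a single point — nonemptiness because each $J_k$ is a nonempty closed arc and the maps are homeomorphisms of $R$ onto its image giving a decreasing sequence of nonempty compacta (in the real-line topology on $R$), and singleton-ness from the uniform expansion constant $> 1$ of $L|_R$. Both of these are routine given the tools already assembled, so the proof should be short.
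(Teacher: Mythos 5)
Your proposal is correct and takes essentially the same route as the paper: the forward implication by recording which basic arc each $L^n(P)$ lies in and noting the corresponding arrows, and the converse by intersecting the nested preimages $\bigcap_{i=0}^n L^{-i}(B_i)$ of the basic arcs along the path to produce a point whose itinerary is the given sign-path. The bookkeeping you flag (gluing points, why the resulting itinerary is not one of the removed isolated ones) is dispatched just as briefly in the paper's own argument, so nothing further is needed.
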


\begin{proof}
Assume that $\bar p$ is essential $R$-admissible. This means that
there exists a point $P\in R$ with itinerary $\bar p$, and $\bar p$ is
not isolated. For every $n\in \Z$ there is a basic arc $B_n$ to which
$L^n(P)$ belongs. Then there is an arrow in the folding tree from the
vertex representing $B_n$ to the vertex representing $B_{n+1}$, so
$\bar p$ is a sign-path in the folding tree.

Now assume that $\bar p$ is a sign-path in the folding tree. If the
corresponding bi-infinite sequence of vertices is
$(B_n)_{n=-\infty}^\infty$, then the sequence
$$\left(\bigcap_{i=-n}^n L^{-i}(B_i)\right)_{n=0}^\infty
=\left(\bigcap_{i=0}^n L^{-i}(B_i)\right)_{n=0}^\infty$$ of
intervals of $R$ is a nested sequence of compact sets, so there exists
a point $P\in R$ such that $L^n(P)\in B_n$ for every $n\in\Z$. Thus,
$\bar p$ is the itinerary of $P$ and clearly it is not isolated, and
this proves that $\bar p$ is essential $R$-admissible.
\end{proof}

The above theorem shows that the folding tree of $L$ determines the
set of all essential $R$-admissible sequences. By
Theorem~\ref{t:equivalent}, the same can be said if we replace the
folding tree by the set of kneading sequences or by the folding
pattern. However, in order to mimic the kneading theory for unimodal
interval maps, we would like to have a more straightforward
characterization of all essential $R$-admissible sequences by the
kneading set.

First we have to remember that itineraries of all points of $R$ start
with $\lpinf$. Next thing that simplifies our task is that $R$ is
invariant for $L$, so the set of all essential $R$-admissible
sequences is invariant for $\sigma$. This means that apart of the
sequence $\lpinf\cdot\rpinf$ (which is $R$-admissible, because it is
the itinerary of $X$), we only need a tool of checking essential
$R$-admissibility of sequences of the form $\lpinf\cdot p_0p_1p_2\dots
=\lpinf\cdot\ra p_{\h 0}$, such that $p_0=-$.

\begin{theorem}\label{knead-R}
A sequence $\lpinf\cdot\ra p_{\h 0}$, such that $p_0=-$, is essential
$R$-admissible if and only if for every kneading sequence
$\lpinf\alpha\pm\cdot\ra k_{\h 0}$, such that $\alpha=p_0p_1\dots p_m$
for some $m$, we have $\sigma^{m+2}(\ra p_{\h 0})\preceq\ra k_{\h 0}$.
\end{theorem}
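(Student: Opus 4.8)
The plan is to translate the combinatorial condition $\sigma^{m+2}(\ra p_{\h 0})\preceq\ra k_{\h 0}$ back into the geometric language of basic arcs and the folding tree, and then invoke Theorem~\ref{t:admis}. First I would recall that a sequence $\lpinf\cdot\ra p_{\h 0}$ with $p_0=-$ is essential $R$-admissible if and only if, writing $q_i := p_i$ for the shifted indexing, the truncations $p_0p_1\dots p_n$ are arc-codes for all $n$ — equivalently, the vertices $B_n$ they name form a bi-infinite path in the folding tree. So the task reduces to showing that the inequalities against kneading sequences are exactly the obstruction to $p_0\dots p_n$ being an arc-code at every stage. The key observation, already present in Section~\ref{sec:ba}, is that when we apply $L$ to a basic arc $I_\a$, the image $L(I_\a)$ either is a single basic arc (if no gluing point lies in its interior) or splits at a gluing point $G$ into exactly two basic arcs, and the turning point $T = L(G)$ is the one whose itinerary $\ra k_{\h 0}$ controls where the split happens.

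The heart of the argument is the induction step. Suppose $p_0\dots p_{n-1}$ is an arc-code, so $I_{p_0\dots p_{n-1}}$ is a genuine basic arc $[E_j,E_{j+1}]$. Ask whether $p_0\dots p_{n-1}p_n$ is also an arc-code. By the structure of $L$ on $R$, the word $p_0\dots p_{n-1}p_n$ fails to be an arc-code precisely when appending $p_n$ would place the (hypothetical) arc on the wrong side of a gluing point that lies in the interior of $L^{?}(\cdot)$ further up — more precisely, one shows by tracking the pull-back $L^{-n}$ of $I_{p_0\dots p_{n-1}p_n}$ into $[G_0,T_0]$ that the obstruction occurs exactly when there is a turning point $T$ with itinerary $\lpinf\alpha\pm\cdot\ra k_{\h 0}$, $\alpha = p_0p_1\dots p_m$, and the tail $\sigma^{m+2}(\ra p_{\h 0})$ fails to be $\preceq\ra k_{\h 0}$. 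The role of $m+2$ is that $L^{-2}(T_n)$ lies in the arc coded by $\alpha$ (this is exactly the statement "$\alpha^n$ is the arc-code of the basic arc containing $L^{-2}(T_n)$" from the definition of kneading sequences), so that after the arc-code prefix $\alpha$ of length $m+1$ we must shift by one more to line up the tail of $\bar p$ with the tail $\ra k_{\h 0}$ of the kneading sequence. The parity-lexicographical inequality $\preceq$ then says, via Lemma~\ref{l:lessknead} and Lemma~\ref{l:order of points}, that the point $L^{n}(P)$ being coded stays on the correct side of $T$, i.e.\ the arc does not overshoot the turning point and split off into a region that is not in $R$.

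Concretely I would argue both directions. For necessity: if $\lpinf\cdot\ra p_{\h 0}$ is essential $R$-admissible, realized by $P\in R$, then for each kneading sequence with arc-code prefix $\alpha = p_0\dots p_m$ matching $\bar p$, the point $L^{m+2}(P)$ and the turning point $T$ (with $L^{-2}(T)\in I_\alpha = I_{p_0\dots p_m}$, so that $L^{-2}(T)$ and $L^m(P)$ lie in the same basic arc, hence $L^{-1}(T)$ and $L^{m+1}(P)$ lie in a common basic arc, and then $T$ and $L^{m+2}(P)$ lie in a common basic arc whose straight-line segment we can use in Lemma~\ref{l:lessknead}) satisfy $\ra{(L^{m+2}(P))}_{\h 0}\preceq\ra t_{\h 0}$, which is precisely $\sigma^{m+2}(\ra p_{\h 0})\preceq\ra k_{\h 0}$. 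For sufficiency: assuming all these inequalities hold, I would build the nested sequence of basic arcs $B_0\supset$-labels by induction as above — at each step the inequalities against the relevant kneading sequences guarantee that $p_0\dots p_n$ remains an arc-code — obtaining a bi-infinite path in the folding tree, and then conclude by Theorem~\ref{t:admis}. The main obstacle I expect is bookkeeping the exact correspondence between "appending $p_n$ to an arc-code" and "not overshooting a specific turning point," in particular verifying that the turning points one must check are exactly those whose arc-code part is an initial segment $p_0\dots p_m$ of $\bar p$ (no more, no fewer), and handling the boundary case where $L^n(P)$ is itself a basic point (the $p_k=\pm$ ambiguity), where one uses that $\bar p\in\Sigma_R^e$ and Lemma~\ref{approx} to perturb to a point with a unique itinerary.
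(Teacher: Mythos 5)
Your plan follows the paper's proof essentially verbatim: necessity via Lemma~\ref{l:lessknead} applied to the straight segment joining $L^{m+2}(P)$ to the turning point (justified, as you note, by pushing $I_\alpha \ni L^m(P), L^{-2}(Q)$ forward two steps), and sufficiency by reducing to sign-paths via Theorem~\ref{t:admis} and locating, at the first vertex where the path cannot be extended by $p_{n+1}$, a turning point whose kneading inequality is violated. The ``bookkeeping'' you defer is precisely the paper's remaining content---taking the endpoint of the existing sibling arc $I_{p_0\dots p_n\widehat{p}_{n+1}}$ nearer the $y$-axis as $L^i(Q)$, setting $m=n-i-1$, and running the parity case analysis on the number of $+$s among $p_{n-i+1},\dots,p_n$ to contradict $\sigma^{n-i+1}(\ra p_{\h 0})\preceq\ra k_{\h 0}$---and it goes through exactly as you anticipate.
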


\begin{proof}
Assume first that a point $P\in R$ has the itinerary $\lpinf\cdot\ra
p_{\h 0}$, such that $p_0=-$, a turning point $Q$ has the itinerary
$\lpinf\alpha\pm\cdot\ra k_{\h 0}$, and $\alpha=p_0p_1\dots p_m$ for
some $m$. If $L^{m+2}(P)=Q$, then $\sigma^{m+2}(\ra p_{\h 0})=\ra
k_{\h 0}$. If $L^{m+2}(P)\neq Q$, then the arc $[L^{m+2}(P),Q]$ is a
straight line segment (as a subset of $\Lambda$), so by
Lemma~\ref{l:lessknead}, $\sigma^{m+2}(\ra p_{\h 0})\prec\ra k_{\h
  0}$.

Assume now that a sequence $\lpinf\cdot\ra p_{\h 0}$, such that
$p_0=-$, is given, and that for every kneading sequence
$\lpinf\alpha\pm\cdot\ra k_{\h 0}$, such that $\alpha=p_0p_1\dots p_m$
for some $m$, we have $\sigma^{m+2}(\ra p_{\h 0})\preceq\ra k_{\h 0}$.
Suppose that $\lpinf\cdot\ra p_{\h 0}$ is not essential
$R$-admissible. Then, by Theorem~\ref{t:admis}, it is not a sign-path
in the folding tree. This means that when we go down the tree trying
to find the corresponding sign-path, at a certain moment we get to a
vertex from which we cannot move down to a vertex with the correct
sign. That is, we found the part $\lpinf\cdot p_0p_1\dots p_n$ of a
sign-path, but we cannot append it with $p_{n+1}$.

Denote by $\widehat p_{n+1}$ the sign opposite to $p_{n+1}$. Look at
the basic arc $J:=I_{p_0p_1\dots p_n\widehat p_{n+1}}$ as a subset of
$\Lambda$. It is a straight line segment with endpoints that are
turning or postturning points. Consider the endpoint which is closer
to the $y$-axis. It is of the form $L^i(Q)$, where $Q$ is a turning
point and $i\in\N_0$. The kneading sequence of $Q$ is
$\lpinf\alpha\pm\cdot\ra k_{\h 0}$, with $\alpha=p_0p_1\dots p_m$,
where $m=n-i-1$. Thus, by the assumption, $\sigma^{n-i+1}(\ra p_{\h
  0})\preceq\ra k_{\h 0}$.

The point $Q$ is a turning point, so it is the right endpoint of
$L^{-i}(J)$. If the number of $+$s among $p_{n-i+1},p_{n-i+2},\dots,
p_n$ is even, then $L^i(Q)$ is the right endpoint of $J$, so $J$ is in
the left half-plane. This means that $\widehat p_{n+1}=-$, so
$p_{n+1}=+$. Both sequences $\sigma^{n-i+1}(\ra p_{\h 0})$ and $\ra
k_{\h 0}$ start with $p_{n-i+1},p_{n-i+2},\dots,p_n$. Then in
$\sigma^{n-i+1}(\ra p_{\h 0})$ we have $p_{n+1}=+$, while in $\ra
k_{\h 0}$ we have $\widehat p_{n+1}=-$. But this means that $\ra k_{\h
  0}\prec\sigma^{n-i+1}(\ra p_{\h 0})$, a contradiction.

Similarly, if the number of $+$s among $p_{n-i+1},p_{n-i+2},\dots,
p_n$ is odd, then $L^i(Q)$ is the left endpoint of $J$, so $J$ is in
the right half-plane. This means that $\widehat p_{n+1}=+$, so
$p_{n+1}=-$. Both sequences $\sigma^{n-i+1}(\ra p_{\h 0})$ and $\ra
k_{\h 0}$ start with $p_{n-i+1},p_{n-i+2},\dots,p_n$. Then in
$\sigma^{n-i+1}(\ra p_{\h 0})$ we have $p_{n+1}=-$, while in $\ra
k_{\h 0}$ we have $\widehat p_{n+1}=+$. But this means that $\ra k_{\h
  0}\prec\sigma^{n-i+1}(\ra p_{\h 0})$, a contradiction.

In both cases we got a contradiction, so $\lpinf\cdot\ra p_{\h 0}$ has
to be essential $R$-admissible.
\end{proof}

Now, that we know which sequences are essential $R$-admissible, we know
how to check whether a sequence $\bar p$ is essential admissible.
Namely, since the topology in the symbolic space is the product
topology, for each $n\in\N$ we check whether there is an essential
$R$-admissible sequence $\bar q$ such that $p_{-n}\dots
p_n=q_{-n}\dots q_n$. By Theorem~\ref{t:admis} this means that for
every $n$ we have to check whether the finite sequence $p_{-n}\dots
p_n$ is a finite sign-path in the folding tree.

\medskip
\noindent
Michal Misiurewicz\\
Department of Mathematical Sciences\\
Indiana University -- Purdue University Indianapolis\\
402 N.\ Blackford Street, Indianapolis, IN 46202\\
\texttt{mmisiurewicz@math.iupui.edu}\\
\texttt{http://www.math.iupui.edu/}$\sim$\texttt{mmisiure/}

\medskip
\noindent
Sonja \v Stimac\\
Department of Mathematics\\
University of Zagreb\\
Bijeni\v cka 30, 10\,000 Zagreb, Croatia\\
\& Department of Mathematical Sciences\\
Indiana University -- Purdue University Indianapolis\\
402 N.\ Blackford Street, Indianapolis, IN 46202\\
\texttt{sonja@math.hr}\\
\texttt{http://www.math.hr/}$\sim$\texttt{sonja}

\end{document}